\documentclass[12pt,reqno, basicdelimiters]{NumPDEsArticle}
\usepackage[utf8]{inputenc}
\usepackage[T1]{fontenc}
\usepackage[english]{babel}

\usepackage{NumPDEsMacros}

\usepackage{csquotes}
\usepackage{enumitem}
\usepackage{amsmath,amssymb}
\usepackage{microtype}
\usepackage{bbm}
\usepackage{bm}
\usepackage{tikz}
\usepackage{subfig}
\usepackage{graphicx}
\usepackage{stackengine}
\usepackage{xcolor}
\usepackage{float}
\usepackage{orcidlink}
\usepackage{soul}

\newcommand{\osc}{\mathrm{osc}}

\makeatletter
\DeclareBibliographyDriver{article}{%
  \usebibmacro{bibindex}%
  \usebibmacro{begentry}%
  \usebibmacro{author/translator+others}%
  \setunit{\printdelim{nametitledelim}}\newblock
  \usebibmacro{title}%
  \newunit
  \printlist{language}%
  \newunit\newblock
  \usebibmacro{byauthor}%
  \newunit\newblock
  \usebibmacro{bytranslator+others}%
  \newunit\newblock
  \printfield{version}%
  \newunit\newblock
  \usebibmacro{journal+issuetitle}%
  \newunit
  \usebibmacro{byeditor+others}%
  \newunit\newblock
  \iftoggle{bbx:isbn}
    {\printfield{issn}}
    {}%
  \newunit\newblock
  \usebibmacro{addendum+pubstate}%
  \newunit\newblock
  \printfield{note}%
  \newunit\newblock
  \usebibmacro{doi+eprint+url}%
  \setunit{\bibpagerefpunct}\newblock
  \usebibmacro{pageref}%
  \newunit\newblock
  \iftoggle{bbx:related}
    {\usebibmacro{related:init}%
     \usebibmacro{related}}
    {}%
  \usebibmacro{finentry}}
\makeatother

\title{Optimal convergence of adaptive BEM driven by functional-type error estimators}

\author{Maximilian Brunner\,\orcidlink{0000-0003-0636-1491}}
\author{Alexander Freiszlinger\,\orcidlink{0009-0005-0877-8137}}
\author{Dirk Pauly\,\orcidlink{0000-0003-4155-7297}}
\author{Dirk Praetorius\,\orcidlink{0000-0002-1977-9830}}

\address{TU Wien, Institute of Analysis and Scientific Computing, Wiedner Hauptstraße 8-10, 1040 Wien, Austria}

\email{maximilian.brunner@asc.tuwien.ac.at}
\email{alexander.freiszlinger@asc.tuwien.ac.at \quad \rm (corresponding author)}

\address{TU Dresden, Institute of Analysis, Zellescher Weg 12-14, 01069 Dresden, Germany}
\email{dirk.pauly@tu-dresden.de}

\address{TU Wien, Institute of Analysis and Scientific Computing, Wiedner Hauptstraße 8-10, 1040 Wien, Austria}

\email{dirk.praetorius@asc.tuwien.ac.at}

\setul{0.5ex}{0.3ex}
\setulcolor{blue}

\thanks{This research was funded by the Austrian Science Fund (FWF) projects
\href{https://www.fwf.ac.at/en/research-radar/10.55776/F65}{10.55776/F65} (SFB
F65 ``Taming complexity in PDE systems''),
\href{https://www.fwf.ac.at/en/research-radar/10.55776/I6802}{10.55776/I6802}
(international project I6802 ``Functional error estimates for PDEs on unbounded
domains''), and
\href{https://www.fwf.ac.at/en/research-radar/10.55776/P33216}{10.55776/P33216}
(standalone project P33216 ``Computational nonlinear PDEs'').}

\begin{document}

\maketitle 

\begin{abstract}
In the present work, we derive functional upper bounds for the potential error arising from boundary element discretizations of the Laplace--Dirichlet problem.
These bounds are based on local auxiliary problems on patches of boundary vertices and the resulting \textsl{a~posteriori} error estimator is shown to be locally equivalent to the well-studied residual error estimator.
This equivalence result allows us to prove $R$-linear convergence of the functional \textsl{a~posteriori} error estimator and, together with a suitable mesh-refining strategy, to establish that the potential error as well as the functional error estimator converge with optimal rates with respect to the number of boundary elements.
Numerical experiments affirm the theoretical findings and illustrate the practical performance of the related adaptive algorithm driven by the proposed functional error estimator.
\end{abstract}

\section{Introduction}

Let $\Omega \subset \R^d$ with $d \in \set{2,3}$ be a Lipschitz domain with compact boundary $\Gamma \coloneqq \partial \Omega$.
We consider the Laplace--Dirichlet problem of finding the potential $u^\star$ such that
\begin{equation} \label{eq:Laplace}
 - \Delta u^\star = 0 \quad \text{in } \Omega 
 \quad \text{ subject to } \quad 
 u^\star = g \quad \text{on } \Gamma.
\end{equation}
We employ a boundary integral approach and consider an indirect ansatz using the single-layer potential, i.e.,
\begin{equation} \label{eq:ansatz}
 u^\star = \widetilde{V} \phi^\star \coloneqq \int_\Gamma G(\cdot,y) \phi^\star(y) \d y,
\end{equation}
where $\phi^\star$ is an unknown density and $G$ is the fundamental solution of the Laplace equation, i.e.,
\begin{equation} \label{eq:fundamental}
 G(x,y) \coloneqq
 \begin{cases}
  -\frac{1}{2\pi} \log \abs{x - y} & \text{if } d = 2, \\
  \frac{1}{4\pi} \abs{x - y}^{-1} & \text{if } d = 3.
 \end{cases} 
\end{equation}
Restricting~\eqref{eq:ansatz} to the boundary $\Gamma$, we obtain the weakly-singular boundary integral equation of the first kind
\begin{equation} \label{eq:BIE}
 V \phi^\star \coloneqq (\widetilde{V} \phi^\star)|_\Gamma = g,
\end{equation}
where the integral representation of $V$ on the boundary coincides with that of $\widetilde{V}$ in the domain; see, e.g.,~\cite[Section~7]{McLean2000}.
The boundary integral operator $V \colon H^{-1/2}(\Gamma) \to H^{1/2}(\Gamma)$ is bounded, symmetric and, provided that $\mathrm{diam}(\Gamma) < 1$ for $d = 2$, also elliptic.
Hence, the Lax--Milgram lemma ensures that there exists a unique solution $\phi^\star \in H^{-1/2}(\Gamma)$ to~\eqref{eq:BIE} and, consequently, a unique solution $u^\star$ to~\eqref{eq:Laplace} given by~\eqref{eq:ansatz}.

Given a mesh $\TT_H^\Gamma$ of $\Gamma$ and a polynomial degree $p \in \N_0$, the boundary element method (BEM) seeks to discretize~\eqref{eq:BIE} and to approximate $\phi^\star$ by a $\TT_H^\Gamma$-piecewise polynomial $\phi_H \in \PP^p(\TT_H^\Gamma)$ on the boundary $\Gamma$.
The resulting approximation of the potential $u^\star$ is then given by
\begin{equation} \label{eq:potapprox}
  u_H \coloneqq \widetilde{V} \phi_H \approx u^\star.
\end{equation}

Independently of the specific discretization, the fundamental properties of $\widetilde{V}$ ensure that the approximate potential $u_H$ and hence the potential error $u^\star - u_H$ are harmonic, i.e., $\Delta u_H = 0 = \Delta (u^\star - u_H)$ in $\Omega$.
Based on the harmonicity of $u^\star - u_H$ and the Dirichlet principle, we derive a functional \textsl{a~posteriori} error estimator $\eta_H$ that provides an essentially constant-free (up to data oscillations) upper bound for the potential error, i.e., 
\begin{equation*} \label{eq:potbound}
 \norm{\nabla(u^\star - u_H)}_\Omega \leq \eta_H + \const{C}{osc} \osc_H, 
\end{equation*}
and, in particular, empirically
\begin{equation*}
 \norm{\nabla(u^\star - u_H)}_\Omega \approx \eta_H,
\end{equation*} 
where $\const{C}{osc} > 0$ denotes a generic constant.
Functional error estimation strategies have successfully been applied to various settings and problem formulations, such as elliptic PDEs in bounded domains~\cite{Ainsworth1997,Braess2008,Repin2008,Ern2015,Anjam2016} and unbounded domains~\cite{Pauly2009}, the boundary element method for the Laplace--Dirichlet problem~\cite{Kurz2021,Freiszlinger2025}, and the full-space transmission problem~\cite{Freiszlinger2026}.

While similar error estimation strategies rely on solving a discrete auxiliary problem on a boundary strip $\omega_H \subseteq \Omega$ (see, e.g.,~\cite{Kurz2021,Freiszlinger2025}), the proposed functional upper bound in this paper is based on local auxiliary problems on patches of boundary vertices. 
With this new approach, this work improves the error estimation strategy proposed in~\cite{Kurz2021} in the sense that the resulting functional error estimator is able to more accurately identify areas, where the error is comparatively large while retaining the properties of the error estimator from~\cite{Kurz2021}, i.e., guaranteed upper bounds for the potential error independently of the discretization method. 
Beyond these improvements, a major contribution of this work lies in proving optimal convergence rates with respect to the number of boundary elements for the potential error arising from Galerkin BEM, i.e., $\phi_H = \phi_H^\star \in \PP^p(\TT_H^\Gamma)$ is the Galerkin approximation to~\eqref{eq:BIE} in $\PP^p(\TT_H^\Gamma)$ and $u_H^\star \coloneqq \widetilde{V} \phi_H^\star \approx u^\star$ is the reconstructed approximation$\colon$ For a constant $\const{C}{opt} > 0$ that is independent of the number of boundary elements, there holds
\begin{equation} \label{eq:introopt}
 \norm{\nabla(u^\star - u_H^\star)}_\Omega
 \leq \const{C}{opt} \bigl(\#\TT_H^\Gamma\bigr)^{-s},
\end{equation}
where the decay rate $s > 0$ is guaranteed to be as large as possible and hence optimal with respect to the usual nonlinear approximation classes.

While existing results on optimal convergence of Galerkin BEM are restricted to the residual-based error estimator~\cite{Carstensen1997,Carstensen2001} and its properties (see, e.g.,~\cite{Feischl2013,Gantumur2013,Feischl2014,Gantner2022,Gantner2022a}), we note that certain properties like \emph{stability} and \emph{reduction} (see~\cite{Carstensen2014}) are not known to hold for the functional error estimator proposed in the present work.
Instead, we reveal certain local equivalences between the new functional error estimator and the residual error estimator.
These equivalences allow us to prove $R$-linear convergence of the functional error estimator and, with the usual mesh refinement by newest-vertex bisection, to establish that the potential error as well as the functional error estimator converge with optimal rates with respect to the number of boundary elements.

\textbf{Outline.} The paper is structured as follows.
In Section~\ref{sec:prelim}, we collect the necessary notation and useful preliminary results about function spaces, meshes, and Galerkin BEM.
Section~\ref{sec:apost} derives the new functional estimates based on local auxiliary problems on patches of boundary vertices (Section~\ref{sec:funcest}), proposes an associated adaptive algorithm (Section~\ref{sec:adap}), and states the main results (Section~\ref{sec:mainresults}), i.e., $R$-linear convergence (Theorem~\ref{thm:linconv}) and optimal convergence rates (Theorem~\ref{thm:opt}).
In Section~\ref{sec:equiv}, we prove the core result that the proposed functional error estimator and the residual error estimator are locally equivalent (Theorem~\ref{thm:equiv}). 
The proof of $R$-linear convergence of the functional error estimator is given in Section~\ref{sec:linconv}. 
In Section~\ref{sec:optconv}, we prove the claim of Theorem~\ref{thm:opt} that the potential error and the functional error estimator converge with optimal rates with respect to the number of boundary elements. 
Finally, Section~\ref{sec:numerics} concludes the work with some numerical experiments which underline the theoretical findings and illustrate the performance of the adaptive algorithm. 

\section{Preliminaries} \label{sec:prelim}

\subsection{General notation}

Let $d \in \set{2,3}$. Let $\Omega \subset \R^d$ be a Lipschitz domain with compact boundary $\Gamma \coloneqq \partial \Omega$. 
By $\abs{\cdot}$, we denote, without any ambiguity, the absolute value of a scalar, the Euclidean norm of a vector in $\R^n$ for any $n \in \N$, the $d$-dimensional Lebesgue measure of a measurable set in $\R^d$, or the $(d-1)$-dimensional Hausdorff measure of a surface (piece).
We write $\int_U \cdot \d x$ for integration over a Lebesgue measurable set $U \subseteq \overline{\Omega}$ or over a Hausdorff measurable set $U \subseteq \Gamma$.

Throughout, discrete objects or quantities associated with a discrete object are indicated by an index, e.g., $\TT_H$, $\phi_H$ etc.
Moreover, we always use the same index for related objects, e.g., $\phi_H^\star$ is the Galerkin BEM solution associated with the mesh $\TT_H^\Gamma$ and $\rho_H$ is the residual error estimator associated with the mesh $\TT_H$. 

Lastly, we abbreviate notation in proofs and write $a \lesssim b$ if there exists a generic constant $C > 0$ such that $a \leq C b$.
The precise dependencies of the hidden constant are either clear from the context or explicitly stated in the text.
If $a \lesssim b$ and $b \lesssim a$, we write $a \simeq b$.

\subsection{Function spaces}

For Lipschitz domains $U \subseteq \R^d$, we denote by $L^2(U)$ the usual space of square-integrable functions on $U$ with corresponding inner product resp.\ norm
\begin{equation*}
 \dual{v}{w}_U \coloneqq \int_U v w \d x,
 \quad \norm{v}_U \coloneqq \dual{v}{v}_U^{1/2}.
\end{equation*}
For vector-valued spaces $[L^2(U)]^n$ with $n \in \N$, the inner product and norm are defined componentwise and we will omit $n$ if it is clear from the context.
For measurable boundary pieces $S \subseteq \Gamma$, the space $L^2(S)$, the scalar product $\dual{\cdot}{\cdot}_S$, and the norm $\norm{\cdot}_S$ are defined analogously with respect to the $(d-1)$-dimensional Hausdorff measure.

We define the usual first-order Sobolev space
\begin{equation*}
  H^1(U) \coloneqq \set{v \in L^2(U) \text{ weakly differentiable } \given \nabla v \in L^2(U)}
\end{equation*}
with associated norm
\begin{equation*}
 \norm{v}_{H^1(U)} \coloneqq \bigl( \norm{v}_U^2 + \norm{\nabla v}_U^2 \bigr)^{1/2},
\end{equation*}
where $\nabla$ denotes the distributional gradient.
For unbounded domains $U$ with compact boundary $\partial U$, we introduce the weight function $\varrho \colon U \to \R_{>0}$ defined by
\begin{equation*}
 \varrho(x) \coloneqq
 \begin{cases}
  (1 + \abs{x})^{-1} (1 + \log(1 + \abs{x}))^{-1} & \text{if } d = 2, \\
  (1 + \abs{x})^{-1} & \text{if } d = 3
 \end{cases}
\end{equation*}
and define the weighted $L^2$-space
\begin{equation*}
 L^2_{\varrho}(U) \coloneqq \set{v \colon U \to \R \text{ measurable } \given \varrho v \in L^2(U)}.
\end{equation*}
Additionally, we introduce the weighted Sobolev space
\begin{equation*}
 H^1_{\varrho}(U) \coloneqq \set{v \in L^2_{\varrho}(U) \text{ weakly differentiable } \given \nabla v \in L^2(U)}.
\end{equation*}
Since this work deals with the Laplace--Dirichlet problem~\eqref{eq:Laplace} on bounded and unbounded domains, we use the following convention to shorten the presentation$\colon$ We write $L^2_\varrho(U)$ and $H^1_\varrho(U)$ throughout, with the understanding that $\varrho \equiv 1$ if $U$ is bounded.

For boundary pieces $S \subseteq \Gamma$ and with the tangential gradient $\nabla_\Gamma$, we define
\begin{equation*}
 H^1(S) \coloneqq \set{\psi \in L^2(S) \text{ weakly differentiable } \given \nabla_\Gamma \psi \in L^2(S)}
\end{equation*}
with associated norm
\begin{equation*}
 \norm{\psi}_{H^1(S)} \coloneqq \bigl( \norm{\psi}_S^2 + \norm{\nabla_\Gamma \psi}_S^2 \bigr)^{1/2}.
\end{equation*}
Furthermore, we define the fractional Sobolev space 
\begin{equation*}
  H^{1/2}(S) \coloneqq \Big \{ \psi \in L^2(S) \, | \, \abs{\psi}_{H^{1/2}(S)}^2 \coloneqq \int_S \int_S \frac{\abs{\psi(x) - \psi(y)}^2}{\abs{x - y}^d} \d x \d y < \infty \Big \},
\end{equation*}
which is associated with the norm
\begin{equation*}
 \norm{\psi}_{H^{1/2}(S)} \coloneqq \bigl( \norm{\psi}_S^2 + \abs{\psi}_{H^{1/2}(S)}^2 \bigr)^{1/2}.
\end{equation*}
For Lipschitz domains $U$ (bounded or unbounded) with compact boundary $\partial U$, it is well-known that $H^{1/2}(\partial U)$ is the trace space of $H^1_{\varrho}(U)$, i.e., there holds 
\begin{equation} \label{eq:traceineq}
  H^{1/2}(\partial U) = \set{v|_{\partial U} \given v \in H^1_{\varrho}(U)} 
  \quad \text{ and } \quad  
  \norm{v|_{\partial U}}_{H^{1/2}(\partial U)} \leq \const{C}{trace}\norm{v}_{H^1_{\varrho}(U)},
\end{equation}
where $\const{C}{trace} > 0$ depends only on $U$.
Lastly, we define the negative-order Sobolev space $H^{-1/2}(S)$ as the dual of $H^{1/2}(S)$, where the duality pairing is defined by continuous extension of the $L^2(S)$-inner product, i.e.,
\begin{equation*}
  \dual{\phi}{\psi}_{H^{-1/2}(S) \times H^{1/2}(S)} \coloneqq \dual{\phi}{\psi}_S
  \quad \text{ for all } \phi \in L^2(S) \text{ and } \psi \in H^{1/2}(S).
\end{equation*}
To ease readability, we will write $\dual{\cdot}{\cdot}_S$ instead of $\dual{\cdot}{\cdot}_{H^{-1/2}(S) \times H^{1/2}(S)}$.

\subsection{Meshes and newest-vertex bisection} \label{subsec:meshes}

Throughout this work, each mesh $\TT_H$ of $U \subset \R^d$ will be a conforming triangulation of the bounded set $U$ by compact simplices $T \in \TT_H$, i.e., there holds $\overline{U} = \bigcup_{T \in \TT_H} T$ and the intersection of two different simplices $T,T' \in \TT_H$ is either empty, a common vertex, a common edge, or a common face (for $d = 3$). 

Throughout, we assume that all considered meshes are obtained from some fixed initial mesh $\TT_0$ by finitely many steps of newest-vertex bisection (NVB). 
We refer to~\cite{Stevenson2008} for NVB for $d \geq 2$ with additional assumptions on the initial mesh $\TT_0$, as well as to \cite{Karkulik2012} (for $d = 2$) and \cite{Diening2025} (for general $d \geq 2$) for the case of arbitrary initial meshes. 
Given a set of marked elements $\MM_H \subseteq \TT_H$, we denote by
\begin{equation*}
 \TT_h \coloneqq \mathtt{refine}(\TT_H, \MM_H)
\end{equation*}
the coarsest mesh where all elements in $\MM_H$ are refined by at least one bisection, i.e., $\MM_H \subseteq \TT_H \setminus \TT_h$.
For some given mesh $\TT_H$, let $\T(\TT_H)$ be the set of all meshes that can be obtained from $\TT_H$ by finitely many steps of newest-vertex bisection.
For $\TT_H = \TT_0$, we simply write $\T \coloneqq \T(\TT_0)$.

Clearly, any mesh $\TT_H$ on $U$ induces a conforming boundary mesh 
\begin{equation*}
  \TT_H^{\partial U} \coloneqq \TT_H|_{\partial U} \coloneqq \set{T \cap \partial U \given T \in \TT_H \text{ and } \abs{T \cap \partial U} > 0}.
\end{equation*}
We note that NVB guarantees uniform $\kappa$-shape regularity of all meshes in $\T$ and their respective boundary meshes, i.e., there exists $\kappa > 0$  depending only on $\TT_0$ such that
\begin{equation} \label{eq:shapereg}
  \sup_{T \in \TT_H} \frac{\mathrm{diam}(T)}{\abs{T}^{1/d}} 
  + \sup_{F \in \TT_H^{\partial U}} \frac{\mathrm{diam}(F)}{\abs{F}^{1/(d-1)}} 
  + \sup_{\substack{F,F' \in \TT_H^{\partial U} \\ F \cap F' \neq \emptyset}} \frac{\mathrm{diam}(F)}{\mathrm{diam}(F')}
  \leq \kappa 
  \quad \text{for all } \TT_H \in \T.
\end{equation}
\subsection{Vertices and patches}

For each mesh $\TT_H$, let $\NN_H$ denote the set of all vertices $z$ of $\TT_H$ and $\NN_H^{\partial U} \coloneqq \NN_H \cap \partial U$ be the set of all vertices of $\TT_H^{\partial U}$.
Given $k \in \N$ and some set $V \subseteq U$, we define the patch and the $k$-patch of $V$ as
\begin{equation*}
  \TT_H[V] \coloneqq \set{T \in \TT_H \given V \cap T \neq \emptyset}
  \quad \text{ and } \quad
 \TT_H^k[V] \coloneqq
 \begin{cases} 
  \TT_H[V] & \text{if } k = 1, \\
  \TT_H\big [\bigcup\TT_H^{k-1}[V]\big] & \text{if } k > 1.
 \end{cases}
\end{equation*}
We set $U_H[V] \coloneqq \bigcup_{T \in \TT_H[V]} T$.
If $V = \set{z}$ is a single vertex, we simply write $\TT_H[z] \coloneqq \TT_H[\set{z}]$ and $U_H[z] \coloneqq U_H[\set{z}]$.
If $V = \bigcup \UU_H$ for some subset $\UU_H \subseteq \TT_H$, we write $\TT_H[\UU_H] \coloneqq \TT_H[\bigcup \UU_H]$ and $U_H[\UU_H] \coloneqq U_H[\bigcup \UU_H]$.

Patches of boundary pieces $S \subseteq \partial U$ with respect to a boundary mesh $\TT_H^{\partial U}$ are defined accordingly, i.e., $\TT_H^{\partial U}[S] \coloneqq \set{F \in \TT_H^{\partial U} \given S \cap F \neq \emptyset}$ and $\partial U_H[S] \coloneqq \bigcup \TT_H^{\partial U}[S]$.

Finally, we define the mesh-size function $h \colon \bigcup_{\TT_H \in \T}\TT_H \to \R_{>0}$ by $h_T \coloneqq \abs{T}^{1/d}$.
For boundary facets $F \in \bigcup_{\TT_H \in \T} \TT_H^\Gamma$, we write $h_F \coloneqq \abs{F}^{1/(d-1)}$.

\subsection{Discrete spaces} \label{subsec:discspace}

For $r \in \N_0$ and $U \subseteq \R^d$, we define $\P^r(U)$ as the space of polynomials of total degree at most $r$ on $U$.
Given a mesh $\TT_H$ of some bounded Lipschitz domain $U \subseteq \R^d$, we define the spaces of (in general discontinuous) piecewise polynomials
\begin{equation*}
 \begin{split}
  \PP^q(\TT_H) &\coloneqq \set{v \colon U \to \R \given v|_T \in \P^q(T) \text{ for all } T \in \TT_H}, \\
  \PP^q(\TT_H^{\partial U}) &\coloneqq \set{v \colon \partial U \to \R \given v|_F \in \P^q(F) \text{ for all } F \in \TT_H^{\partial U}},
 \end{split}
\end{equation*}
and the usual $H^1$-conforming finite element spaces
\begin{equation*}
 \begin{split}
  \SS^p(\TT_H) = \PP^p(\TT_H) \cap C(\overline{U}) \subset H^1(U)
  \ \text{ and } \ 
  \SS^p(\TT_H^{\partial U}) = \PP^p(\TT_H^{\partial U}) \cap C(\partial U) \subset H^1(\partial U)
 \end{split}
\end{equation*}
as well as 
\begin{equation*}
 \SS^p_0(\TT_H) \coloneqq \set{v \in \SS^p(\TT_H) \given v|_{\partial U} = 0}.
\end{equation*}
We introduce the hat functions $(\zeta_{H,z})_{z \in \NN_H} \subset \SS^1(\TT_H)$ defined by 
\begin{equation} \label{eq:hatdef}
 \zeta_{H,z}(z') \coloneqq
  \begin{cases}
    1 & \text{if } z' = z, \\
    0 & \text{if } z' \in \NN_H \setminus \set{z}.
  \end{cases}
\end{equation}
It is well-known that the hat functions form a basis of $\SS^1(\TT_H)$ and constitute a partition of unity on $\overline{U}$ as well as on its boundary $\partial U$, i.e.,
\begin{equation} \label{eq:hatunity}
 \sum_{z \in \NN_H} \zeta_{H,z} = 1 \quad \text{pointwise on } \overline{U}
 \qquad \text{ and } \qquad  \sum_{z \in \NN_H^{\partial U}} \zeta_{H,z}|_{\partial U} = 1 \quad \text{pointwise on } \partial U.
\end{equation}
Lastly, we define the $L^2(\Gamma)$-orthogonal projection $Q_H \colon L^2(\Gamma) \to \PP^p(\TT_H^\Gamma)$, which satisfies
\begin{equation} \label{eq:L2proj}
  \norm{(1 - Q_H)\psi}_F \leq \const{C}{app} h_F \norm{\nabla_\Gamma \psi}_F
  \quad \text{for all } F \in \TT_H^\Gamma \text{ and } \psi \in H^1(\Gamma),
\end{equation}
where $\const{C}{app} > 0$ depends only on $\kappa$-shape regularity of $\TT_H^\Gamma$.
\subsection{Galerkin BEM}

We consider the single-layer operator $\widetilde{V}$
\begin{equation*}
  \widetilde{V} \colon H^{-1/2}(\Gamma) \to H^1_\varrho(\Omega) \quad \text{defined by} \quad (\widetilde{V} \phi)(x) \coloneqq \int_\Gamma G(x,y) \phi(y) \d y
  \quad \text{for all } x \in \Omega,
\end{equation*}
where $G$ is the fundamental solution of the Laplace equation; see~\eqref{eq:fundamental}.
Setting $V \coloneqq \widetilde{V}|_\Gamma \colon H^{-1/2}(\Gamma) \to H^{1/2}(\Gamma)$, we have that the integral representation of $V$ coincides with that of $\widetilde{V}$ on the boundary.
Additionally, $V$ is a bounded isomorphism and, provided that $\mathrm{diam}(\Gamma) < 1$ for $d = 2$, elliptic, i.e., there exists $\const{C}{ell} > 0$ depending only on $\Gamma$ such that
\begin{equation*}
  \enorm{\phi}^2 \coloneqq \dual{V\phi}{\phi}_\Gamma \geq \const{C}{ell} \norm{\phi}_{H^{-1/2}(\Gamma)}^2
  \quad \text{for all } \phi \in H^{-1/2}(\Gamma);
\end{equation*} 
see, e.g.,~\cite[Section~7]{McLean2000}. Hence, given $g \in H^{1/2}(\Gamma)$, the Lax--Milgram lemma ensures existence and uniqueness of the solution $\phi^\star \in H^{-1/2}(\Gamma)$ to~\eqref{eq:BIE} and $\norm{\phi^\star}_{H^{-1/2}(\Gamma)} \leq \const{C}{ell}^{-1} \norm{g}_{H^{1/2}(\Gamma)}$.

Given $p \in \N_0$, we consider the discrete problem of finding $\phi_H^\star \in \PP^p(\TT_H^\Gamma)$ such that
\begin{equation} \label{eq:discprob}
 \dual{V \phi_H^\star}{\psi_H}_\Gamma = \dual{g}{\psi_H}_\Gamma
 \quad \text{for all } \psi_H \in \PP^p(\TT_H^\Gamma).
\end{equation}
By virtue of the Lax--Milgram lemma, \eqref{eq:discprob} admits a unique solution $\phi_H^\star \in \PP^p(\TT_H^\Gamma)$.
An approximation $u_H^\star$ of $u^\star$ is then extracted by $u_H^\star \coloneqq \widetilde{V} \phi_H^\star$.

\section{Adaptive algorithm and main result} \label{sec:apost}

Throughout the remainder of this work, suppose that $g \in H^1(\Gamma)$ and let $\omega \subseteq \Omega$ be a bounded subset such that $\Gamma \subseteq \partial \omega$, which is referred to as \emph{boundary strip domain}. 
Additionally, let $\TT_H$ be a mesh on $\omega$ with $\TT_H^\Gamma \coloneqq \TT_H|_\Gamma$. 
We recall the residual-based error estimator from~\cite{Carstensen1997,Carstensen2001} and the functional error estimator from~\cite{Kurz2021,Freiszlinger2025}.

\subsection{Residual error estimator}

We define the residual error estimator as
\begin{equation} \label{eq:resest}
 \rho_H(F;\psi) \coloneqq h_F^{1/2} \norm{\nabla_\Gamma(g - V\psi)}_F
 \quad \text{for all } F \in \TT_H^\Gamma \text{ and } \psi \in L^2(\Gamma),
\end{equation}
as well as for $\SS_H \subseteq \TT_H^\Gamma$
\begin{equation*}
 \rho_H(\SS_H;\psi) \coloneqq \Bigl(\sum_{F \in \SS_H} \rho_H(F;\psi)^2 \Bigr)^{1/2} \quad \text{ and } \quad \rho_H(\psi) \coloneqq \rho_H(\TT_H;\psi) \coloneqq \rho_H(\TT_H^\Gamma;\psi).
\end{equation*}
Note that for $\psi \in L^2(\Gamma)$, we have $V\psi \in H^1(\Gamma)$; see, e.g.,~\cite[Section~7]{McLean2000}.
For $\psi = \phi_H^\star$, we omit the second argument and simply write $\rho_H(F) \coloneqq \rho_H(F;\phi_H^\star)$, $\rho_H(\SS_H) \coloneqq \rho_H(\SS_H;\phi_H^\star)$, and $\rho_H \coloneqq \rho_H(\TT_H^\Gamma)$.
For $p = 0$, reliability of $\rho_H$, i.e., 
\begin{equation} \label{eq:resrel}
  \enorm{\phi^\star - \phi_H^\star} \leq \const{C}{rel}\rho_H,
\end{equation}
has been proven in~\cite{Carstensen1997} for $d = 2$ and~\cite{Carstensen2001} for $d = 3$, where $ \const{C}{rel} > 0$ depends only on $\Gamma$, $\kappa$-shape regularity, and, additionally for $d=3$, on the shapes of elements and patches in $\TT_H^\Gamma$.
For general $p \in \N_0$, reliability~\eqref{eq:resrel} is proved in~\cite{Feischl2013,Gantumur2013}, where $ \const{C}{rel}$ depends additionally on $p$.

\subsection{Functional error estimator} \label{sec:funcest}

Let $k \in \N$ and $q \in \N$ be fixed.  
For $z \in \NN_H^\Gamma$, we set $\omega_{H,z} \coloneqq \mathrm{int}\bigl(\omega_H^k[z]\bigr)$ and consider the usual finite element space $\SS^q(\TT_H[z]) \subset H^1(\omega_{H,z})$ of order $q \in \N$.
Given $\psi \in L^2(\Gamma)$, we employ the Scott--Zhang projection on the boundary $J_H \colon H^1(\Gamma) \to \SS^q(\TT_H^\Gamma)$ and define the discretized boundary residual $g_H(\psi) \coloneqq J_H(g - V\psi)$.
Recall the hat functions $(\zeta_{H,z})_{z \in \NN_H^\Gamma} \subset \SS^q(\TT_H^\Gamma)$ and define $\iota_{H,z} \coloneqq \omega_{H,z} \cap \NN_H^\Gamma$ as well as 
\begin{equation}
 \xi_{H,z} \coloneqq (\#\iota_{H,z})^{-1} \sum_{z' \in \iota_{H,z}} \zeta_{H,z'}  \in \SS^1(\TT_H^\Gamma).
\end{equation}
By the properties of the hat functions, we obtain
\begin{equation} \label{eq:unity}
 \sum_{z \in \NN_H^\Gamma} \xi_{H,z} = 1
\end{equation}
and $\xi_{H,z} = 0$ on $\partial(\omega_{H,z} \cap \Gamma)$, where the boundary is taken in $\Gamma$. For each $z \in \NN_H^\Gamma$, we consider the local problem of finding $w_{H,z}(\psi) \in \SS^{q + 1}(\TT_H[z])$ such that 
\begin{equation} \label{eq:auxprob}
 \begin{split}
  \dual{\nabla w_{H,z}(\psi)}{\nabla v_H}_{
  \omega_{H,z}} &= 0
  \quad \text{for all } v_H \in \SS^{q + 1}_0(\TT_H[z]), \\
  w_{H,z}(\psi)|_\Gamma &= 
  \begin{cases}
   \xi_{H,z} g_H(\psi) & \text{on } \partial  \omega_{H,z} \cap \Gamma, \\
   0 & \text{on } \partial \omega_{H,z} \setminus      \Gamma.
  \end{cases}
 \end{split}
\end{equation}
Because of the properties of the functions $(\xi_{H,z})_{z \in \NN_H^\Gamma}$, we are able to extend $w_{H,z}(\psi)$ by zero to the whole domain $\Omega$.
We define
\begin{equation} \label{eq:wdef}
 w_H(\psi) \coloneqq \sum_{z \in \NN_H^\Gamma}  w_{H,z}(\psi) \in H^1_{\varrho}(\Omega)
\end{equation}
and note that $w_H(\psi)|_\Gamma = g_H(\psi) = J_H(g - V\psi)$ due to~\eqref{eq:unity}. For $\psi = \phi_H^\star$ we will omit the argument and simply write $g_H \coloneqq g_H(\phi_H^\star)$, $w_{H,z} \coloneqq w_{H,z}(\phi_H^\star)$, and $w_H \coloneqq w_H(\phi_H^\star)$.

We define $\eta_H(T;\psi) \coloneqq \norm{\nabla w_H(\psi)}_T$, $\mathrm{osc}_H(T;\psi) \coloneqq h_T^{1/2}\norm{\nabla_\Gamma (1 - J_H)(g - V\psi)}_{\partial T \cap \Gamma}$, and the functional error estimator
\begin{equation} \label{eq:funcest}
 \mu_H(T;\psi) \coloneqq [\eta_H(T;\psi)^2 + \osc_H(T;\psi)^2]^{1/2}
 \quad \text{for } T \in \TT_H, \psi \in L^2(\Gamma).
\end{equation}
Given $\UU_H \subseteq \TT_H$, we abbreviate 
\begin{subequations} \label{eq:convention}
\begin{equation} 
 \mu_H(\UU_H;\psi) \coloneqq \Bigl(\sum_{T \in \UU_H} \mu_H(T;\psi)^2 \Bigr)^{1/2}.
\end{equation}
In the case of $\UU_H = \TT_H$ or $\psi = \phi_H^\star$, we simply write
\begin{equation} 
 \mu_H(\psi) \coloneqq \mu_H(\TT_H;\psi),
 \quad \mu_H(\UU_H) \coloneqq \mu_H(\UU_H;\phi_H^\star),
 \quad\mu_H \coloneqq \mu_H(\TT_H;\phi_H^\star).
\end{equation}
\end{subequations}
We use the analogous conventions~\eqref{eq:convention} also for the contributions $\eta_H$ and $\osc_H$ of $\mu_H$.

Setting $u_H \coloneqq \widetilde{V}\psi$ for $\psi \in L^2(\Gamma)$, Theorem~5 and Lemma~7 in~\cite{Kurz2021} yield reliability with respect to the $H^1$-seminorm in $\Omega$, i.e., 
\begin{equation} \label{eq:funcrelpot}
\norm{\nabla(u^\star - u_H)}_\Omega \leq \min_{\substack{w \in H^1(\Omega) \\ w|_\Gamma = g_H(\psi)}} \norm{\nabla w}_\Omega + \const{C}{osc}\mathrm{osc}_H(\psi)
 \lesssim \eta_H(\psi) + \mathrm{osc}_H(\psi) 
 \leq \sqrt{2} \, \mu_H(\psi),
\end{equation}
where $\const{C}{osc}>0$ and hence also the hidden constant depend only on $\kappa$-shape regularity of $\TT_H^\Gamma$ and $q$.
\begin{remark}
The functional error estimator $\mu_H$ is also reliable with respect to the energy error $\enorm{\phi^\star - \phi_H^\star}$ as one can see as follows$\colon$ 
Since $(u^\star - u_H^\star)|_\Gamma = V(\phi^\star - \phi_H^\star)$,~\eqref{eq:discprob} and~\eqref{eq:BIE} yield $\dual{u^\star - u_H^\star}{1}_\Gamma = 0$.
Hence, continuity of $V^{-1}$ and a Poincaré-type inequality establish
\begin{equation*}
 \enorm{\phi^\star - \phi_H^\star} 
 \lesssim \norm{(u^\star - u_H^\star)|_\Gamma}_{H^{1/2}(\Gamma)}
 \lesssim \norm{u^\star - u_H^\star}_{H^1_{\varrho}(\Omega)}
 \lesssim \norm{\nabla(u^\star - u_H^\star)}_\Omega,
\end{equation*}
where the hidden constants depend only on $\Omega$.
Together with~\eqref{eq:funcrelpot} for $\psi = \phi_H^\star$, this establishes reliability of $\mu_H$ with respect to the energy norm, i.e.,
\begin{equation} \label{eq:funcrel}
 \enorm{\phi^\star - \phi_H^\star} 
 \leq \const{C'}{rel} \mu_H,
\end{equation}
where $\const{C'}{rel} > 0$ depends only on $\Omega$, $\kappa$-shape regularity of $\TT_H^\Gamma$, and the polynomial degree $q$ of the auxiliary problems~\eqref{eq:auxprob}. \qed
\end{remark}
\subsection{Adaptive algorithm} \label{sec:adap}

The following adaptive algorithm is analyzed in the remainder of this work.
\begin{algorithm}[Adaptive algorithm]\label{algo:adap}
 \textbf{Input:}
 Conforming initial triangulation $\TT_0$ of the boundary strip domain $\omega \subseteq \Omega$, polynomial degree $p \in \N_0$ for the Galerkin BEM~\eqref{eq:discprob}, polynomial degree $q \in \N$ for the functional estimator in~\eqref{eq:auxprob}--\eqref{eq:funcest}, patch size $k \in \N$, and marking parameters $0 < \theta \leq 1$ and $\const{C}{mark} \geq 1$.
 \\
 \textbf{Loop:} For all $\ell = 0,1,2,\dots$, repeat the following steps:
 \begin{enumerate}[label = \rm(\roman*)]
  \item Extract the boundary mesh $\TT_{\ell}^{\Gamma} = \TT_\ell|_\Gamma$ from $\TT_{\ell}$ and solve~\eqref{eq:discprob} to obtain the Galerkin BEM solution $\phi_{\ell}^\star \in \PP^p(\TT_{\ell}^{\Gamma})$.
  \item Compute the error indicators $\mu_\ell(T)$ from~\eqref{eq:funcest} for all $T \in \TT_\ell$.
  \item Determine a set $\MM_\ell \subseteq \TT_{\ell}$ with up to a factor $\const{C}{mark}$ minimal cardinality such that
  \begin{align}\label{eq:doerfler}
   \theta \mu_\ell^2 \leq \mu_\ell(\MM_\ell)^2
  \end{align}
  \item Generate a new triangulation $\TT_{\ell+1}$ by use of NVB such that at least all elements in $\MM_{\ell}$ are refined, i.e., $\MM_\ell \subseteq \TT_\ell \setminus \TT_{\ell+1}$.
 \end{enumerate}
\end{algorithm}
\subsection{Main results} \label{sec:mainresults}
Results from~\cite{Feischl2013,Gantumur2013} show that under certain conditions on the adaptivity parameter $\theta$, an adaptive algorithm steered by the residual error estimator $\rho_H$ admits quasi-optimal convergence with respect to the number $\# \TT_H^\Gamma$ of boundary elements.
We aim to transfer this result to the functional error estimator $\mu_H$.
As in earlier works (\cite{Binev2004,Cascon2008,Feischl2013,Gantumur2013,Carstensen2014}), the key argument to prove this result is unconditional $R$-linear convergence of the functional error estimator $\mu_H$. 
The proof is postponed to Section~\ref{sec:linconv} below.
\begin{theorem} \label{thm:linconv}
 For any $0 < \theta \leq 1$, there exist constants $\const{C}{lin} > 0$ and $0 < \const{q}{lin} < 1$ depending only on the use of NVB, the polynomial degrees $p$, $q$, the patch size $k$, the dimension $d$, and the marking parameter $\theta$, such that Algorithm~\ref{algo:adap} guarantees $R$-linear convergence
 \begin{equation} \label{eq:linconv}
  \mu_{\ell + n} \leq \const{C}{lin} \const{q}{lin}^n\mu_\ell
  \quad \text{for all } \ell, n \in \N_0.
 \end{equation}
\end{theorem}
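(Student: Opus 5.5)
The plan is to transfer the known $R$-linear convergence of the residual estimator to $\mu_H$ through the local equivalence announced in the introduction (Theorem~\ref{thm:equiv}). That equivalence has the form
\begin{equation*}
 \mu_H(T) \lesssim \rho_H\bigl(\TT_H^\Gamma[\TT_H^{m}[T]]\bigr), \qquad
 \rho_H(F) \lesssim \mu_H\bigl(\TT_H^{m}[F]\bigr)
\end{equation*}
for some patch order $m$ depending on $k$, with constants depending only on $\kappa$, $p$, $q$, $k$, $d$. Summing these local bounds with finite patch overlap gives the global equivalence $\mu_H \simeq \rho_H$. It also gives a transferred D\"orfler property: if $\theta\mu_\ell^2 \le \mu_\ell(\MM_\ell)^2$, then
\begin{equation*}
 \theta' \rho_\ell^2 \le \rho_\ell(\widehat\MM_\ell)^2,
\end{equation*}
where $\widehat\MM_\ell$ is the set of boundary facets in the $m$-patch of $\MM_\ell$ and $\theta' \simeq \theta$.

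The difficulty is that NVB refines only $\MM_\ell$ plus closure, not the enlarged patch $\widehat\MM_\ell$. So we cannot simply invoke the standard estimator reduction for $\rho$, which needs the marked facets themselves to be refined. Instead I would use the standard quasi-orthogonality route of Carstensen et al.~\cite{Carstensen2014}. Galerkin orthogonality for $V$ gives the Pythagoras identity
\begin{equation*}
 \enorm{\phi^\star-\phi_{\ell+1}^\star}^2 = \enorm{\phi^\star-\phi_\ell^\star}^2 - \enorm{\phi_{\ell+1}^\star-\phi_\ell^\star}^2.
\end{equation*}
We also have reliability $\enorm{\phi^\star-\phi_\ell^\star}\lesssim\mu_\ell$ from~\eqref{eq:funcrel}. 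What is still missing is a contraction-type bound for $\mu$ itself.

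To get it, I would work with $\rho$ as the proxy and prove the perturbed reduction
\begin{equation*}
 \rho_{\ell+1}^2 \le \rho_\ell^2 - c\,\rho_\ell(\MM_\ell^\Gamma)^2 + C\enorm{\phi_{\ell+1}^\star-\phi_\ell^\star}^2 .
\end{equation*}
This follows from stability and reduction of $\rho$ (known from~\cite{Feischl2013,Gantumur2013}), because each refined facet halves $h_F$. Here $\MM_\ell^\Gamma$ collects the boundary facets of elements in $\MM_\ell$. If $\MM_\ell$ contains interior elements only, those elements may carry $\mu$-mass without touching $\Gamma$, so $\MM_\ell^\Gamma$ may not capture a fixed fraction of $\rho_\ell^2$.

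This transfer of D\"orfler marking is the main obstacle. My first attempt is as follows. The estimator $\mu_\ell$ is supported on the union of the patches $\omega_{\ell,z}$, and the hat functions $\xi_{\ell,z}$ vanish on patch boundaries. Hence refining any element $T$ in a patch changes the local problem. I would then show a local lower bound: $\rho_{\ell+1}$ on facets near refined elements decreases, or alternatively $\mu_{\ell+1}(T)\le q\,\mu_\ell(T)$ for refined $T$, up to $\enorm{\phi_{\ell+1}^\star-\phi_\ell^\star}$. The latter looks hard. The fallback is to accept a weaker step that uses only the global equivalence and a summability criterion. That is, show
\begin{equation*}
 \sum_{n>\ell}\mu_n^2 \lesssim \mu_\ell^2
\end{equation*}
via
\begin{equation*}
 \sum_{n>\ell} \enorm{\phi^\star-\phi_n^\star}^2 \lesssim \mu_\ell^2,
\end{equation*}
which follows from a contraction of the quasi-error $\rho_n^2+\gamma\enorm{\phi^\star-\phi_n^\star}^2$ whenever the D\"orfler transfer holds. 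This summability is equivalent to $R$-linear convergence~\eqref{eq:linconv} by the standard lemma of~\cite{Carstensen2014}.
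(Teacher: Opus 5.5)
\textbf{Verdict: there is a genuine gap.} You correctly isolate the obstacle: D\"orfler marking for $\mu_\ell$ transfers to the residual estimator only on the enlarged set $\TT_\ell^{2k+1}[\MM_\ell]$, while NVB guarantees bisection only of $\MM_\ell$ itself. But you never resolve this. Your fallback is circular. The quasi-error contraction for $\rho_n^2+\gamma\enorm{\phi^\star-\phi_n^\star}^2$ holds ``whenever the D\"orfler transfer holds'', i.e., only once $\rho$ is known to be reduced on a set carrying a fixed fraction of $\rho_\ell^2$. That is exactly the missing step. In addition, passing from $\sum_{n>\ell}\enorm{\phi^\star-\phi_n^\star}^2\lesssim\mu_\ell^2$ to $\sum_{n>\ell}\mu_n^2\lesssim\mu_\ell^2$ would need efficiency $\mu_n\lesssim\enorm{\phi^\star-\phi_n^\star}$, which is not available here. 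It is also unnecessary: a quasi-error contraction together with $\mu\simeq\rho$ would give the summability directly.

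\textbf{The missing idea.} Change the mesh-size weight rather than the estimator. Take the equivalent mesh-size function $\overline H$ from~\cite[Proposition~8.6]{Carstensen2014} (Lemma~\ref{thm:equivmeshsize}) with $m=2k+1$. It satisfies $\overline H(T)\simeq h_T$, and $\overline h(T')\le\const{q}{eq}\,\overline H(T)$ for all $T'\subseteq T$ and \emph{all} $T$ in the patch $\TT_H^{m}[\TT_H\setminus\TT_h]$, not only for the bisected elements.

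Define the modified estimator on volume elements,
\[
\overline\rho_H(T;\psi)\coloneqq\overline H(T)^{1/2}\norm{\nabla_\Gamma(g-V\psi)}_{\partial T\cap\Gamma}.
\]
It has four properties:
\begin{enumerate}
\item it is locally equivalent to $\mu_H$, namely $\overline\rho_H(\UU_H)\lesssim\mu_H(\UU_H)$ and $\mu_H(\UU_H)\lesssim\overline\rho_H(\TT_H^{2k+1}[\UU_H])$;
\item it is stable on $\TT_H\setminus\TT_H^m[\TT_H\setminus\TT_h]$;
\item it contracts by $\const{q}{eq}^{1/2}$ on $\TT_H^m[\TT_H\setminus\TT_h]$;
\item the contraction set contains $\TT_\ell^{2k+1}[\MM_\ell]$.
\end{enumerate}
So bisecting even a purely interior marked element shrinks the weights of all boundary elements in its $(2k+1)$-patch. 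That patch is exactly where its $\mu$-mass is controlled, so this also settles your concern about interior elements.

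\textbf{How the proof then closes.} D\"orfler marking for $\mu_\ell$ yields $\theta'\overline\rho_\ell^2\le\overline\rho_\ell(\TT_\ell^{2k+1}[\MM_\ell])^2$. Together with stability and reduction, this gives the perturbed contraction
\[
\overline\rho_{\ell+1}^2\le(1+\delta)\bigl[1-(1-\const{q}{eq})\theta'\bigr]\overline\rho_\ell^2+C_\delta\,\enorm{\phi_{\ell+1}^\star-\phi_\ell^\star}^2 .
\]
Your Pythagoras identity plus reliability gives $\sum_{\ell'\ge\ell}\enorm{\phi_{\ell'+1}^\star-\phi_{\ell'}^\star}^2\lesssim\overline\rho_\ell^2$. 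Lemma~\ref{lem:summability} then yields $R$-linear convergence of $\overline\rho_\ell$, which transfers to $\mu_\ell$ via the global equivalence $\mu_\ell\simeq\overline\rho_\ell$.
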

For the remainder of this work, we assume that $\TT_0$ is an initial mesh of the boundary strip domain $\omega$ and $\T \coloneqq \T(\TT_0)$.
For $N \in \N_0$, we define 
\begin{equation} \label{eq:Nmeshes}
 \begin{split}
  \T_N 
  &\coloneqq
  \set{\TT_H \in \T \given \# \TT_H - \# \TT_0 \leq N}, \\ 
  \mathbb{T}_N^\Gamma
  &\coloneqq
  \set{\TT_H \in \T \given \# \TT^\Gamma_{H} - \# \TT^\Gamma_0 \leq N},
 \end{split}
\end{equation}
i.e., all meshes that have at most $N$ (boundary) elements more than the initial mesh.
We note that $\T_N \subseteq \T_N^\Gamma$ and that both sets are indeed finite.
We say that the integral density $\phi^\star \in H^{-1/2}(\Gamma)$ belongs to the approximation class $\A_s^\rho$ for some $s > 0$, if there holds
\begin{equation} \label{eq:approxclass}
 \norm{\phi^\star}_{\A_s^\rho} \coloneqq \sup_{N \in \N_0} \bigl((N + 1)^s \min_{\TT_H \in \T_N^\Gamma} \rho_H \bigr) < \infty,
\end{equation}
i.e., the error estimator $\rho_H$ decays at least with rate $s > 0$ along a sequence of boundary meshes $\TT_H^\Gamma$ induced by optimal volume meshes $\TT_H$.
Analogously, we say that the corresponding potential $u^\star = \widetilde{V} \phi^\star$ belongs to the approximation class $\A_s^\mu$ for some $s > 0$, if there holds
\begin{equation} \label{eq:approxclassmu}
 \norm{u^\star}_{\A_s^\mu} \coloneqq \sup_{N \in \N_0} \bigl((N + 1)^s \min_{\TT_H \in \T_N} \mu_H \bigr) < \infty,
\end{equation}
i.e., the error estimator $\mu_H$ decays at least with rate $s > 0$ along a sequence of optimal volume meshes $\TT_H$.
The main theorem of this work states, first, that $\mu_H$ possesses the same approximation properties (with respect to $\# \TT_H$) as $\rho_H$ (with respect to $\#\TT_H^\Gamma$) and, second, if $\phi^\star$ can be approximated with the rate $s$ by $\rho_H$, then the adaptive algorithm steered by the functional error estimator $\mu_H$ will also achieve this rate provided the marking parameter $\theta$ is chosen sufficiently small.
The proof of the following theorem is postponed to Section~\ref{sec:optconv} below.
\begin{theorem} \label{thm:opt}
There exist constants $\const{c}{rate}, \const{C}{rate} > 0$ depending only on the use of NVB, the patch size $k$, the rate $s$, and the polynomial degree $q$, such that
\begin{equation} \label{eq:approxclasseq}
\const{c}{rate} \norm{\phi^\star}_{\A_s^\rho} \leq \norm{u^\star}_{\A_s^\mu} \leq \const{C}{rate} \norm{\phi^\star}_{\A_s^\rho}.
\end{equation}
Moreover, provided that $\norm{\phi^\star}_{\A_s^\rho} < \infty$, there exist constants $0 < \const{\theta}{opt} < 1$, $\const{C}{opt} > 0$, and $\const{C}{est} > 0$ such that Algorithm~\ref{algo:adap} with adaptivity parameter $0 < \theta \leq \const{\theta}{opt}$ guarantees
\begin{equation} \label{eq:opt}
  2^{-1/2} \min\set{1,\const{C}{osc}^{-1}} \norm{\nabla(u^\star - u_\ell^\star)}_\Omega
  \leq \mu_\ell \leq \const{C}{est} \rho_\ell \leq  \const{C}{opt} N_\ell^{-s}
  \leq \const{C}{opt} (N_\ell^\Gamma)^{-s},
\end{equation}
where $N_\ell \coloneqq \# \TT_\ell - \#\TT_0 + 1$ and $N_\ell^\Gamma \coloneqq \# \TT_\ell^\Gamma - \#\TT_0^\Gamma + 1$.
The constants $\const{\theta}{opt}$ and $\const{C}{opt}$ depend only on $\norm{\phi^\star}_{\A_s^\rho}$, the marking parameter $\theta$, the constants $\const{C}{lin}$ and $\const{q}{lin}$ from~\eqref{eq:linconv}, and the use of NVB, whereas $\const{C}{est}$ depends only on $\kappa$-shape regularity of the meshes $(\TT_\ell)_{\ell \in \N_0}$ and the patch size $k$.
\end{theorem}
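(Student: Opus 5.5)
The plan rests on the local equivalence between $\mu_H$ and $\rho_H$ announced in the outline as Theorem~\ref{thm:equiv}. For every $T\in\TT_H$ I expect $\mu_H(T)\lesssim\rho_H(\TT_H^{\Gamma}[\omega_H^{m}[T]])$. For every $F\in\TT_H^\Gamma$ I expect $\rho_H(F)\lesssim\mu_H(\TT_H^{m}[F])$. Here $m$ depends only on $k$, and the constants depend only on $\kappa$, $k$, and $q$. I would take this as given, together with finite overlap of the patches, which yields the global equivalence $\mu_H\simeq\rho_H$. The first inequality in~\eqref{eq:opt} follows directly from~\eqref{eq:funcrelpot} with $\psi=\phi_\ell^\star$. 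The estimate $\mu_\ell\le\const{C}{est}\rho_\ell$ is the global upper half of the equivalence.

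For~\eqref{eq:approxclasseq}, let $\TT_H\in\T_N$ and note that $\#\TT_H^\Gamma-\#\TT_0^\Gamma\lesssim\#\TT_H-\#\TT_0$. Combining this with $\rho_H\lesssim\mu_H$ gives the lower bound $\norm{\phi^\star}_{\A_s^\rho}\lesssim\norm{u^\star}_{\A_s^\mu}$, after a standard index shift $N\mapsto CN$.

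The upper bound is harder. Given $\TT_H\in\T_N^\Gamma$, the volume mesh $\TT_H$ may contain many interior elements, so I would construct a new volume mesh $\widehat\TT_H\in\T$. It should have the same boundary mesh, or a refinement of it with comparable cardinality, and satisfy $\#\widehat\TT_H-\#\TT_0\lesssim\#\TT_H^\Gamma-\#\TT_0^\Gamma$. To build it, I would refine $\TT_0$ by NVB, marking only elements touching $\Gamma$, until the induced boundary mesh is at least as fine as $\TT_H^\Gamma$. The NVB closure estimate then bounds the added volume elements by a constant times the number of marked elements. Each marked boundary-touching element is associated with a boundary bisection, so the count is $\lesssim\#\TT_H^\Gamma-\#\TT_0^\Gamma$. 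A refined boundary mesh only decreases the Galerkin residual estimator up to a constant, since $\rho_h\lesssim\rho_H$ by quasi-monotonicity from~\cite{Feischl2013,Gantumur2013}. This step gives $\min_{\T_{CN}}\mu\lesssim\min_{\T_N^\Gamma}\rho$ and hence the upper bound. I expect this mesh-construction and counting step to be the main technical obstacle, because the boundary mesh of a volume NVB refinement is not simply a boundary NVB refinement.

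For the rate in~\eqref{eq:opt}, I would follow the axioms-of-adaptivity argument of~\cite{Carstensen2014}, with $\mu$ serving as the marking estimator and $\rho$ as a comparison estimator. The argument has three steps.
\begin{enumerate}
\item \emph{Optimality of Dörfler marking.} Let $\TT_h$ be a refinement of $\TT_\ell$ with $\rho_h\le\lambda\rho_\ell$. The discrete reliability of $\rho$~\cite{Feischl2013} gives $\rho_\ell^2\lesssim\rho_\ell(\RR)^2$ with $\RR$ the refined boundary elements plus a patch. The local equivalence then transfers this to $\theta\mu_\ell^2\le\mu_\ell(\widehat\RR)^2$ for all $\theta\le\const{\theta}{opt}$, where $\widehat\RR$ is a $m$-patch of $\RR$ in $\TT_\ell$ with $\#\widehat\RR\lesssim\#\TT_h-\#\TT_\ell$.
\item \emph{Cardinality bound on the marked sets.} Choose $\TT_h$ as the overlay of $\TT_\ell$ with a quasi-optimal mesh from the approximation class $\A_s^\mu$, for which~\eqref{eq:approxclasseq} has already been established. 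Quasi-minimality of $\MM_\ell$ then yields $\#\MM_\ell\lesssim\mu_\ell^{-1/s}$.
\item \emph{Summation.} The NVB closure estimate and Theorem~\ref{thm:linconv} combine through the usual geometric-series argument to give $N_\ell\lesssim\mu_\ell^{-1/s}$.
\end{enumerate}
This gives $\mu_\ell\lesssim N_\ell^{-s}$. The same argument with $\rho_\ell\lesssim\mu_\ell$ gives the stated bound on $\const{C}{est}\rho_\ell$. The last inequality in~\eqref{eq:opt} is trivial since $N_\ell^\Gamma\lesssim N_\ell$; I would adjust the constant accordingly if needed.
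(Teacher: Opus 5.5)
Your proposal is correct and follows essentially the same route as the paper. The ingredients match: the local equivalence of $\mu_H$ and $\rho_H$; a boundary-driven NVB construction with the closure estimate that turns $\T_N^\Gamma$-optimality into volume-mesh optimality; and then optimality of D\"orfler marking for $\rho$, transferred to $\mu$, combined with the overlay, quasi-minimality of $\MM_\ell$, the closure estimate, and Theorem~\ref{thm:linconv}.

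There are two small deviations. First, the paper argues that the closure never refines beyond the target mesh, so $\TT_H^\Gamma=\TT_h^\Gamma$ holds exactly and quasi-monotonicity is not needed there. Second, you compare against a quasi-optimal volume mesh from $\A_s^\mu$ and count volume elements via the overlay estimate as stated, whereas the paper uses a mesh from $\A_s^\rho$ and applies the overlay estimate to boundary meshes. Both variants are valid.
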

\section{Local equivalences} \label{sec:equiv}
The residual error estimator $\rho_\ell$ satisfies $R$-linear convergence if it is used to steer an adaptive algorithm; see, e.g.,~\cite{Feischl2013,Gantumur2013,Carstensen2014}.
In order to transfer this result to the functional error estimator $\mu_H$, the following theorem will turn out to be the key argument. Throughout the remainder of this work, given $F \in \TT_H^\Gamma$, we denote by $T_F \in \TT_H$ the unique simplex satisfying $F \subset \partial T_F$.
\begin{theorem}[Local equivalence of \boldsymbol{$\rho_H$} and \boldsymbol{$\mu_H$}] \label{thm:equiv}
 The error estimators $\rho_H$ from~\eqref{eq:resest} and $\mu_H$ from~\eqref{eq:funcest} are locally equivalent in the sense that there exist constants $\const{C}{R2F}, \const{C}{F2R} > 0$ such that
 \begin{subequations} \label{eq:equiv}
 \begin{align}
   \rho_H(\SS_H;\psi) 
   &\leq \const{C}{R2F} \Bigl( \sum_{F \in \SS_H} \mu_H(T_F;\psi)^2 \Bigr)^{1/2}
   \quad \text{for all } \psi \in L^2(\Gamma) \text{ and } \SS_H \subseteq \TT_H^\Gamma, \label{eq:R2F} \\
   \mu_H(\UU_H)
   &\leq \const{C}{F2R} \Bigl( \sum_{\substack{F \in \TT_H^\Gamma \\ F \subset \omega_H^{2k + 1}[\UU_H]}} \rho_H(F)^2 \Bigr)^{1/2}
   \quad \text{for all } \UU_H \subseteq \TT_H. \label{eq:F2R}
 \end{align}
\end{subequations}
The constant $\const{C}{R2F}$ depends only on $\kappa$-shape regularity of $\TT_H$ and the polynomial degree $q$, whereas $\const{C}{F2R}$ depends only on $\kappa$-shape regularity of $\TT_H$, the polynomial degree $q$, and the patch size $k$.
\end{theorem}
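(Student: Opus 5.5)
We prove the two inequalities separately, both with standard local tools: discrete trace and inverse estimates, properties of the Scott--Zhang projection $J_H$, and the local energy minimality of the discrete harmonic extensions $w_{H,z}$.

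\textbf{Proof of \eqref{eq:R2F}.} Fix $F \in \SS_H$ and write $r \coloneqq g - V\psi \in H^1(\Gamma)$. We split
\begin{equation*}
 \nabla_\Gamma r = \nabla_\Gamma (1-J_H) r + \nabla_\Gamma g_H(\psi).
\end{equation*}
The first term gives exactly $h_F^{1/2}\norm{\nabla_\Gamma(1-J_H)r}_F \simeq \osc_H(T_F;\psi)$, because $h_F \simeq h_{T_F}$ by \eqref{eq:shapereg}.

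For the second term we use that $w_H(\psi)|_\Gamma = g_H(\psi)$ on $F$ and that $w_H(\psi)|_{T_F}$ is a polynomial of degree $q+1$. Hence the tangential gradient of $g_H(\psi)$ on $F$ is the tangential part of $\nabla w_H(\psi)|_{T_F}$ restricted to $F$. A discrete trace inequality (scaling argument on the reference simplex, finite-dimensional polynomial space) then yields
\begin{equation*}
 h_F^{1/2}\norm{\nabla_\Gamma g_H(\psi)}_F \le h_F^{1/2}\norm{\nabla w_H(\psi)}_F \lesssim \norm{\nabla w_H(\psi)}_{T_F} = \eta_H(T_F;\psi).
\end{equation*}
Squaring and summing over $F\in\SS_H$ gives \eqref{eq:R2F}. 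The constant depends only on $\kappa$ and $q$.

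\textbf{Proof of \eqref{eq:F2R}.} The oscillation part is immediate. The stability of Scott--Zhang in $H^1$ gives
\begin{equation*}
 \norm{\nabla_\Gamma(1-J_H)r}_{\partial T\cap\Gamma} \lesssim \norm{\nabla_\Gamma r}_{\Gamma_H[\partial T\cap \Gamma]},
\end{equation*}
so $\osc_H(T)$ is bounded by $\rho_H$ on neighbouring boundary facets.

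For $\eta_H(T)$, only the $w_{H,z}$ with $T \subset \omega_{H,z}$ contribute. Their number is uniformly bounded by shape regularity, and all such $z$ lie in a $k$-patch of $T$. For each such $z$, discrete energy minimality of $w_{H,z}$ implies
\begin{equation*}
 \norm{\nabla w_{H,z}}_{\omega_{H,z}} \le \norm{\nabla E_H(\xi_{H,z} g_H)}_{\omega_{H,z}},
\end{equation*}
where $E_H$ is any discrete extension with the same boundary data. We take $E_H$ to be the nodal extension by zero at interior degrees of freedom. Scaling then gives
\begin{equation*}
 \norm{\nabla E_H v}_{\omega_{H,z}}^2 \lesssim \sum_F \bigl( h_F\norm{\nabla_\Gamma v}_F^2 + h_F^{-1}\norm{v}_F^2 \bigr)
\end{equation*}
for $v = \xi_{H,z} g_H$, which vanishes on $\partial(\omega_{H,z}\cap\Gamma)$. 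The product rule, $\norm{\nabla_\Gamma\xi_{H,z}}_\infty \lesssim h^{-1}$, and the $L^\infty$-bound on $\xi_{H,z}$ reduce this to
\begin{equation*}
 h\norm{\nabla_\Gamma g_H}^2 + h^{-1}\norm{g_H}^2 \quad \text{on } \omega_{H,z}\cap\Gamma.
\end{equation*}
The first term is bounded by $\rho_H^2$ on a slightly enlarged patch, again by Scott--Zhang stability.

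\textbf{Main obstacle: the $L^2$ term $h^{-1}\norm{g_H}^2$.} This term does not vanish in general, so it must be controlled by a gradient. The key is the Galerkin orthogonality \eqref{eq:discprob}: testing with the piecewise constants $\psi_H = \chi_{F'}$ gives
\begin{equation*}
 \int_{F'} r \d x = 0 \quad \text{for every } F' \in \TT_H^\Gamma,
\end{equation*}
so the residual $r$ has a zero on every boundary element. A local Poincaré inequality on a connected patch then gives
\begin{equation*}
 \norm{r}_{\Gamma_H[\cdot]} \lesssim h\norm{\nabla_\Gamma r}_{\Gamma_H[\cdot]}.
\end{equation*}
Combining this with the local $L^2$-stability of $J_H$ yields
\begin{equation*}
 h^{-1}\norm{g_H}^2_{\omega_{H,z}\cap\Gamma} \lesssim h\norm{\nabla_\Gamma r}^2_{\text{patch}},
\end{equation*}
with the patch enlarged by one more layer.

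The remaining work is bookkeeping of the patch layers so that every contribution lands in $\omega_H^{2k+1}[\UU_H]$:
\begin{itemize}
 \item $k$ layers to reach the relevant $z$,
 \item $k$ layers for $\omega_{H,z}$,
 \item one layer for the Scott--Zhang and Poincaré enlargements.
\end{itemize}
The finite overlap of these patches, guaranteed by shape regularity, then gives \eqref{eq:F2R}. The inequality is stated only for $\psi=\phi_H^\star$ precisely because the orthogonality step requires it.
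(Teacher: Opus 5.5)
Your proposal is correct and follows the paper's proof essentially step by step: a discrete trace inequality for \eqref{eq:R2F}, Scott--Zhang stability for the oscillation terms, a bound on the natural discrete lifting of $\xi_{H,z} g_H$, Galerkin orthogonality tested with piecewise constants (which the paper phrases as $Q_H(g - V\phi_H^\star) = 0$) together with an elementwise Poincaré estimate for the $L^2$ term, and the same $k + k + 1$ layer count. The only minor deviation is how the lifting is controlled: you use discrete energy minimality, an $H^1$-type scaling bound and the product rule, whereas the paper imports a lifting estimate in terms of $h_F^{-1}\|\cdot\|_F^2 + |\cdot|_{H^{1/2}(F)}^2$ and then bounds $|\xi_{H,z} g_H|_{H^{1/2}(F)}$ by hand; both are valid for discrete boundary data, and yours is slightly more elementary.
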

\begin{proof}
 The proof is split into six steps.
 
 \textbf{Step~1 (\boldsymbol{$\rho_H(F;\psi) \lesssim \mu_H(T_F;\psi)$}).} Let $F \in \TT_H^\Gamma$ and $\psi \in L^2(\Gamma)$. The discrete trace inequality and $\kappa$-shape regularity yield that
\begin{equation*}
\begin{split}
 \rho_H(F;\psi)
 &= h_F^{1/2} \norm{\nabla_\Gamma(g - V\psi)}_F \\
 &\leq h_F^{1/2} \norm{\nabla_\Gamma J_H(g - V\psi)}_F + h_F^{1/2}\norm{\nabla_\Gamma(1 - J_H)(g - V\psi)}_F \\
 &\eqreff*{eq:wdef}{\leq} h_F^{1/2}\norm{\nabla_\Gamma w_H(\psi)|_\Gamma}_F + h_F^{1/2}\norm{\nabla_\Gamma(1 - J_H)(g - V\psi)}_{\partial T_F \cap \Gamma} \\
 &\lesssim \norm{\nabla w_H(\psi)}_{T_F} + \mathrm{osc}_H(T_F;\psi) 
 \lesssim \mu_H(T_F;\psi).
\end{split}
\end{equation*}
The hidden constant depends only on $\kappa$-shape regularity and the polynomial degree $q$. 
Given $\SS_H \subseteq \TT_H^\Gamma$, we can square and sum this estimate over all $F \in \SS_H$ to conclude the proof of~\eqref{eq:R2F}.

\textbf{Step~2 (\boldsymbol{$\mathrm{osc}_H(T) \lesssim \rho_H(\TT_H^\Gamma[\partial T \cap \Gamma])$}).}
Let $T \in \TT_H$. By the local stability properties of the Scott--Zhang projection and $\kappa$-shape regularity, we obtain that
\begin{equation*}
 \begin{split}
  \mathrm{osc}_H(T)
  &= h_T^{1/2}\norm{\nabla_\Gamma (1 - J_H)(g - V\phi_H^\star)}_{\partial T \cap \Gamma} \\
  &\lesssim h_T^{1/2} \norm{\nabla_\Gamma (g - V\phi_H^\star)}_{\Gamma_H[\partial T \cap \Gamma]}
  \lesssim \rho_H(\TT_H^\Gamma[\partial T \cap \Gamma]).
 \end{split}
\end{equation*}
The hidden constant depends only on $\kappa$-shape regularity and the polynomial degree $q$.

\textbf{Step~3 (Estimate of \boldsymbol{$\Vert \nabla w_{H,z} \rVert_{\omega_{H,z}}$}).}
Let $\overline{g}_{H,z} \in \SS^{q + 1}(\TT_H^{k}[z])$ be the natural lifting of $w_{H,z}|_\Gamma$, i.e., all interior degrees of freedom are set to zero and $\overline{g}_{H,z}|_\Gamma = w_{H,z}|_\Gamma$. We consider the problem of finding $\overline{w}_{H,z} \in \SS_0^{q + 1}(\TT_H^{k}[z])$ such that 
\begin{equation} \label{eq:shiftauxprob}
 \dual{\nabla \overline{w}_{H,z}}{\nabla v_H}_{\omega_{H,z}} = - \dual{\nabla \overline{g}_{H,z}}{\nabla v_H}_{\omega_{H,z}}
 \quad \text{for all } v_H \in \SS_0^{q + 1}(\TT_H^{k}[z]).
\end{equation}
It is well-known that~\eqref{eq:shiftauxprob} admits a unique solution and that there holds $w_{H,z} = \overline{w}_{H,z} + \overline{g}_{H,z}$. Moreover, we obtain that
\begin{equation*}
 \norm{\nabla w_{H,z}}_{\omega_{H,z}} 
 \leq \norm{\nabla \overline{w}_{H,z}}_{\omega_{H,z}} + \norm{\nabla \overline{g}_{H,z}}_{\omega_{H,z}}
 \leq 2 \norm{\nabla \overline{g}_{H,z}}_{\omega_{H,z}}.
\end{equation*}
\cite[Lemma~19]{Freiszlinger2025} allows us to estimate the natural lifting via
\begin{equation} \label{eq:wlzesti}
 \norm{\nabla w_{H,z}}_{\omega_{H,z}}^2
 \lesssim \norm{\nabla \overline{g}_{H,z}}_{\omega_{H,z}}^2 
 \lesssim  \sum_{\substack{F \in \TT_H^\Gamma \\ F \subset \omega_H^k[z]}} \bigl(h_F^{-1}\norm{\xi_{H,z} g_H}^2_F + \abs{\xi_{H,z} g_H}_{H^{1/2}(F)}^2\bigr).
\end{equation}
The hidden constant depends only on $\kappa$-shape regularity and the polynomial degree $q$.

\textbf{Step~4 (\boldsymbol{$h_F^{-1/2} \lVert \xi_{H,z} g_H \rVert_F \lesssim \rho_H(\TT_H^\Gamma[F])$}).}
We start by estimating the first term on the right-hand side of~\eqref{eq:wlzesti}. There holds
\begin{equation} \label{eq:hatesti}
 \norm{\xi_{H,z}}_{L^{\infty}(\Gamma)} \leq (\#\iota_{H,z})^{-1} \sum_{z' \in \iota_{H,z}} \norm{\zeta_{H,z'}}_{L^{\infty}(\Gamma)} = 1.
\end{equation}
Recall the $L^2(\Gamma)$-orthogonal projection $Q_H$ from~\eqref{eq:L2proj}. Together with the stability properties of the Scott-Zhang projection, the fact that $Q_H(g - V\phi_H^\star) = 0$ by~\eqref{eq:discprob}, and the properties of $Q_H$, estimate~\eqref{eq:hatesti} yields that
\begin{equation} \label{eq:L2esti}
\begin{split}
&h_F^{-1/2} \norm{\xi_{H,z} g_H}_F 
\leq h_F^{-1/2} \norm{J_H(g - V\phi_H^\star)}_{F}
\lesssim h_F^{-1/2} \norm{g - V\phi_H^\star}_{\Gamma_H[F]} \\
&\quad \eqreff{eq:discprob}{=} h_F^{-1/2} \norm{(1 - Q_H)(g - V\phi_H^\star)}_{\Gamma_H[F]}
\eqreff{eq:L2proj}{\lesssim} h_F^{1/2} \norm{\nabla_\Gamma (g - V\phi_H^\star)}_{\Gamma_H[F]} 
\lesssim \rho_H(\TT_H^\Gamma[F]).
\end{split}
\end{equation} 
In particular, there holds 
\begin{equation*} 
  \sum_{\substack{F \in \TT_H^\Gamma \\F \subset \omega_H^k[z]}} h_F^{-1} \norm{ \xi_{H,z} g_H}_F^2 
  \lesssim \sum_{\substack{F \in \TT_H^\Gamma \\F \subset \omega_H^{k + 1}[z]}} \rho_H(F)^2
\end{equation*}
 where the hidden constant depends only on $\kappa$-shape regularity.

\textbf{Step~5 (\boldsymbol{$|\xi_{H,z} g_H|_{H^{1/2}(F)} \lesssim \rho_H(\TT_H^\Gamma[F])$}).} Let $F \in \TT_H^\Gamma$ with $F \subseteq \partial \omega_{H,z} \cap \Gamma$ and $x,y \in F$. 
By the fundamental theorem of calculus and~\eqref{eq:hatesti}, we have 
\begin{equation*}
\begin{split}
 \abs{\xi_{H,z}(x)g_H(x) - \xi_{H,z}(y)g_H(y)} &\leq \abs{\xi_{H,z}(x)[g_H(x) - g_H(y)]} + \abs{[\xi_{H,z}(x) - \xi_{H,z}(y)]g_H(y)} \\
 &\leq \abs{g_H(x) - g_H(y)} + \norm{\nabla \xi_{H,z}}_{L^\infty(F)} \abs{x-y} \abs{g_H(y)}.
\end{split}
\end{equation*}
With 
\begin{equation*}
 \norm{\nabla \xi_{H,z}}_{L^{\infty}(F)}
 \leq (\#\iota_{H,z})^{-1} \sum_{z' \in \iota_{H,z}} \norm{\nabla \zeta_{H,z'}}_{L^\infty(F)} 
 \lesssim h_F^{-1}
\end{equation*}
we obtain that
\begin{equation} \label{eq:H12esti}
\begin{split}
 &\abs{\xi_{H,z} g_H}_{H^{1/2}(F)}^2 
 = \int_F \int_F \frac{\abs{\xi_{H,z}(x)g_H(x) - \xi_{H,z}(y)g_H(y)}^2}{\abs{x - y}^d} \d x \d y \\
 &\quad \lesssim \int_F \int_F \frac{\abs{g_H(x) - g_H(y)}^2}{\abs{x - y}^d} \d x \d y 
 + h_F^{-2} \int_F \abs{g_H(y)}^2 \int_F \abs{x - y}^{2 - d} \d x \d y \\
 &\quad = \abs{g_H}_{H^{1/2}(F)}^2 + h_F^{-2} \int_F \abs{g_H(y)}^2 \int_F \abs{x - y}^{2 - d} \d x \d y.
\end{split}
\end{equation}
It remains to estimate the term $\int_F \abs{x - y}^{2-d} \d x$. A transformation to the origin and an additional transformation $\Phi$ with respect to $(d-1)$-dimensional sphere coordinates satisfying $\mathrm{det}\bigl( D \Phi(x)\bigr) \lesssim \abs{x}^{d - 2}$ together with $\kappa$-shape regularity~\eqref{eq:shapereg} yield that
\begin{equation*}
\begin{split}
 \int_F \abs{x - y}^{2-d} \d x 
 \leq \int_{B^{d-1}_{\mathrm{diam}(F)}(0)} \abs{x}^{2-d} \d x 
 \lesssim 2\pi^{d-2} \mathrm{diam}(F)
 \lesssim h_F.
\end{split}
\end{equation*}
This, together with~\eqref{eq:H12esti} establishes that
\begin{equation*}
 \abs{\xi_{H,z} g_H}_{H^{1/2}(F)}^2 
 \lesssim \abs{g_H}_{H^{1/2}(F)}^2 + h_F^{-1} \norm{g_H}_{F}^2.
\end{equation*}
By~\eqref{eq:L2esti} in Step~4, the second term on the right-hand side can be estimated by $\rho_H(\TT_H^\Gamma[F])^2$. It remains to show that $\abs{g_H}_{H^{1/2}(F)} \lesssim \rho_H(\TT_H^\Gamma[F])$. With the properties of the Scott--Zhang operator, we obtain that
\begin{equation*}
 \begin{split}
 \abs{g_H}_{H^{1/2}(F)} 
 &= \abs{J_H(g - V\phi_H^\star)}_{H^{1/2}(F)}
 \lesssim h_F^{1/2} \norm{ \nabla_\Gamma (g - V\phi_H^\star)}_{\Gamma_H[F]} 
 \lesssim \rho_H(\TT_H^\Gamma[F]).
 \end{split}
\end{equation*}
All hidden constants depend only on $\kappa$-shape regularity.

\textbf{Step 6 (\boldsymbol{$\eta_H(T) \lesssim \rho_H(\TT_H^{2k + 1}[T]|_\Gamma)$}).}
Steps~4--5 together with~\eqref{eq:wlzesti} lead to the estimate
\begin{equation*}
 \norm{\nabla w_{H,z}}_{\omega_{H,z}}^2
 \lesssim \sum_{\substack{ F \in \TT_H^\Gamma \\ F \subset \omega_H^{k + 1}[z]}} \rho_H(F)^2,
\end{equation*}
which establishes that
\begin{equation*}
 \begin{split}
  \eta_H(T)^2 
  &\lesssim \sum_{z \in \NN_H \cap \omega_H^{k}[T]} \norm{\nabla w_{H,z}}_{\omega_{H,z}}^2
  \lesssim \sum_{z \in \NN_H \cap \omega_H^{k}[T]} \sum_{\substack{ F \in \TT_H^\Gamma \\ F \subset \omega_H^{k + 1}[z]}} \rho_H(F)^2
  \lesssim \sum_{\substack{ F \in \TT_H^\Gamma \\ F \subset \omega_H^{2k + 1}[T]}}\rho_H(F)^2,
 \end{split}
\end{equation*}
where the hidden constant depends only on $\kappa$-shape regularity, the polynomial degree $q$, and the patch size $k$. 
Given $\UU_H \subseteq \TT_H$, we may sum over all $T \in \UU_H$, to conclude the proof of~\eqref{eq:F2R} yielding $k$-dependency due to patch overlap.
This concludes the proof.
\end{proof}

\section{\texorpdfstring{Proof of Theorem~\ref{thm:linconv} ($\boldsymbol{R}$-linear convergence)}{Proof of R-linear convergence}} \label{sec:linconv}

\subsection{Equivalent mesh-size function}

In order to prove $R$-linear convergence of the functional error estimator $\mu_\ell$ under adaptive mesh-refinement, we recall the following result from~\cite[Proposition~8.6]{Carstensen2014}, which establishes the existence of a mesh-size function $\overline{H}$ such that bisection of elements in $T \in \TT_H$ leads to a uniform reduction of $\overline{H}$ on the patches $\omega_H^m[T]$ of $T$.
\begin{lemma}[Equivalent mesh-size function] \label{thm:equivmeshsize}
 For all $m \in \N$ and all $\TT_H \in \T$, there exists a function $\overline{H} \colon \TT_H \to \R_{>0}$ such that for all refinements $\TT_h \in \T(\TT_H)$ with mesh-size function $\overline{h}$, there hold the following properties:
 \begin{enumerate}[label=\rm(\roman*)]
  \item $\overline{H}(T) \leq h_T \leq \const{C}{eq} \overline{H}(T)$ for all $T \in \TT_H$;
  \item $\overline{h}(T') \leq \overline{H}(T)$ for all $T \in \TT_H$ and all $T' \in \TT_h$ with $T' \subseteq T$;
  \item $\overline{h}(T') \leq \const{q}{eq} \overline{H}(T)$ for all $T \in \TT_H^{m}[\TT_H \setminus \TT_h]$ and all $T' \in \TT_h$ with $T' \subseteq T$;
  \item $\overline{H}(T) = \overline{h}(T)$ for all $T \in \TT_H \setminus \TT_H^{m}[\TT_H \setminus \TT_h]$. 
 \end{enumerate}
 The constants $\const{C}{eq} \geq 1$ and $0 < \const{q}{eq} < 1$ depend only on $\kappa$-shape regularity of $\TT_H$, $m$, and the use of NVB.
\end{lemma}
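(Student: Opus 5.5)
Lemma~\ref{thm:equivmeshsize} is taken from~\cite[Proposition~8.6]{Carstensen2014}, so the plan is to reproduce that construction. The idea is to define $\overline{H}$ as an $\ell^\infty$-type average of the local mesh size over $m$-patches, scaled down by a geometric factor that tracks the refinement history of each element.

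\textbf{Step~1 (generation function).} For $T\in\TT_H$, let $\mathrm{gen}(T)\in\N_0$ be the number of bisections needed to produce $T$ from its ancestor in $\TT_0$. NVB gives $|T| = 2^{-\mathrm{gen}(T)}|T_0|$ for the ancestor $T_0\in\TT_0$, hence $h_T\simeq 2^{-\mathrm{gen}(T)/d}$ with constants depending only on $\TT_0$.

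For the order of the arguments below, two further properties of NVB are essential. First, for neighbouring elements $T,T'$ of a mesh in $\T$, the generations satisfy $|\mathrm{gen}(T)-\mathrm{gen}(T')|\le C_{\rm gen}$; this holds for the meshes of~\cite{Stevenson2008}, and for arbitrary initial meshes it follows from $\kappa$-shape regularity~\eqref{eq:shapereg}. Second, generations never decrease under refinement.

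\textbf{Step~2 (definition).} Set
\begin{equation*}
 \overline{H}(T) \coloneqq \min_{T'\in\TT_H^{m}[T]} \bigl(c\, 2^{-\mathrm{gen}(T')/d}\bigr)^{\alpha}\, 2^{-(1-\alpha)\mathrm{gen}(T)/d}
\end{equation*}
with suitable $0<\alpha<1$ and $c>0$. An equivalent and simpler version, which I would try first, is
\begin{equation*}
 \overline{H}(T) \coloneqq c\,2^{-\max_{T'\in\TT_H^m[T]}\mathrm{gen}(T')/d}\, 2^{-\epsilon\,\mathrm{gen}(T)}
\end{equation*}
for a small $\epsilon$, adjusted so that the following four properties hold.

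\textbf{Step~3 (properties (i), (ii), (iv)).} Property~(i) follows from the bounded generation difference across $m$-patches, since $\max_{T'}\mathrm{gen}(T')\le \mathrm{gen}(T)+mC_{\rm gen}$. Choosing $c$ small then gives $\overline{H}(T)\le h_T\le C_{\rm eq}\overline{H}(T)$. Property~(ii) is monotonicity: for $T'\subseteq T$, both $\mathrm{gen}(T')\ge\mathrm{gen}(T)$ and the patch maxima do not decrease. One must check that the patch of $T'$ in $\TT_h$ only sees elements descending from elements near the patch of $T$; otherwise $\max$ comparisons need care. Property~(iv) holds because if no element within $\TT_H^m[T]$ is refined, the $m$-patch of $T$ is unchanged (up to the layer outside, which does not enter), so $\overline{h}(T)=\overline{H}(T)$.

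\textbf{Step~4 (property (iii), the main obstacle).} If some element of $\TT_H^m[T]$ is bisected, the maximal generation in the patch of every descendant $T'\subseteq T$ increases by at least one. Hence $\overline{h}(T')\le 2^{-1/d}\overline{H}(T)$, giving $q_{\rm eq}=2^{-1/d}$.

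The delicate point is that the patch $\TT_h^m[T']$ in the fine mesh is geometrically smaller than $\TT_H^m[T]$ and may no longer contain the refined element. This is exactly why a pure patch maximum fails and a more robust quantity is needed. The fix I would pursue follows~\cite{Carstensen2014}: replace the patch maximum by a weighted sum $\overline{H}(T)^{-\gamma}\simeq\sum_{T''}\beta^{\mathrm{dist}(T,T'')}h_{T''}^{-\gamma}$ over elements with combinatorial distance at most $m$. Here refined elements at distance $\le m$ always contribute an increase, and the decay factor $\beta$ is chosen so that distances measured in $\TT_h$ versus $\TT_H$ are compatible, using the bounded number of elements in patches and the bounded generation jumps. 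Verifying this compatibility, uniformly via shape regularity, is the step I expect to require the most work.
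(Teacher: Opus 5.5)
The paper does not prove this lemma; it imports it from \cite[Proposition~8.6]{Carstensen2014}. Your reconstruction therefore has to stand on its own, and as written it has a genuine gap. The substance of the lemma is (iii) together with (ii). Neither function you actually define satisfies the lemma, and the weighted-sum replacement is left unverified.

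\begin{itemize}
\item[(a)] The ``equivalent and simpler'' version $c\,2^{-\max_{T''\in\TT_H^m[T]}\mathrm{gen}(T'')/d}\,2^{-\epsilon\,\mathrm{gen}(T)}$ is not equivalent to the $\alpha$-version. It also violates (i): $h_T/\overline H(T)\gtrsim 2^{\epsilon\,\mathrm{gen}(T)}$ is unbounded under uniform refinement.
\item[(b)] For both formulas, (iii) fails even before the shrinking-patch issue. Take $m=1$ and $T$ not bisected, and consider a refinement in which the only bisected elements of $\TT_H[T]$ are bisected once and have generation strictly below the patch maximum. Then $\max_{\TT_h[T]}\mathrm{gen}=\max_{\TT_H[T]}\mathrm{gen}$, hence $\overline h(T)=\overline H(T)$, although $T\in\TT_H^1[\TT_H\setminus\TT_h]$. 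So the claim in Step~4 that the maximum increases by at least one is false.
\item[(c)] Your justification of (ii), that patch maxima do not decrease, fails for the reason you note yourself under (iii). The fine patch $\TT_h^m[T']$ need not contain any descendant of the element realizing $\max_{\TT_H^m[T]}\mathrm{gen}$. An example is a neighbour touching $T$ only in a vertex that the child $T'$ does not contain.
\end{itemize}

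Two further points. A weighted sum fixed only up to $\simeq$ cannot give the exact identity (iv). And ``refined elements at distance $\le m$ always contribute an increase'' is false for the same shrinking reason.

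Your last paragraph points in a workable direction, but the mechanism is not a compatibility of distances in $\TT_h$ and $\TT_H$. What makes it work is this: the \emph{nearest} refined element yields a definite gain, while everything that can be lost lies at least one layer farther out and is damped by the weight.

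Precisely, let $\TT_H^0[T]\coloneqq\{T\}$ and $\mathrm{dist}_H(T,T'')\coloneqq\min\{j\in\N_0 : T''\in\TT_H^j[T]\}$, and define exactly
\begin{equation*}
 \overline H(T)^{-1}\coloneqq\sum_{T''\in\TT_H^m[T]}\beta^{\mathrm{dist}_H(T,T'')}\,h_{T''}^{-1}
 \qquad\text{for a fixed } 0<\beta<1.
\end{equation*}
\begin{itemize}
\item[(i)] This follows from the term $T''=T$, from $\#\TT_H^m[T]\lesssim 1$, and from $h_{T''}\simeq h_T$ on $m$-patches.
\item[(iv)] If $T\notin\TT_H^m[\TT_H\setminus\TT_h]$, then $\TT_H^m[T]\subseteq\TT_h$. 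Hence $\TT_h^m[T]=\TT_H^m[T]$ with identical distances, so the two sums coincide.
\item[(ii)] This follows from (iii) and (iv).
\item[(iii)] If $T$ is bisected, the single term $h_{T'}^{-1}\ge 2^{1/d}h_T^{-1}$ already beats $\overline H(T)^{-1}\le(1+C\beta)h_T^{-1}$ once $\beta$ is small.

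If $T$ is not bisected, let $j\in\{1,\dots,m\}$ be the distance to the nearest bisected element $S$. Every element at distance $\le j$ is reached by a chain with unbisected interior elements. Hence each such element keeps a descendant at fine distance at most its coarse distance. For $S$, this descendant gains at least $(2^{1/d}-1)\beta^j h_S^{-1}\gtrsim\beta^j h_T^{-1}$. Only terms at distance $\ge j+1$ can be lost, at total cost $\le C\beta^{j+1}h_T^{-1}$.

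Choosing $\beta$ small in terms of $\kappa$ and $m$ gives $\overline h(T')^{-1}\ge(1+c\beta^m)\,\overline H(T)^{-1}$. This is (iii) with $\const{q}{eq}<1$ having the dependencies stated in the lemma.
\end{itemize}
No generation bookkeeping is needed.
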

With Lemma~\ref{thm:equivmeshsize}, we are able to define the \emph{modified} error estimator
\begin{equation} \label{eq:modest}
 \overline{\rho}_H(T;\psi) \coloneqq \overline{H}(T)^{1/2} \norm{\nabla_\Gamma(g - V\psi)}_{\partial T \cap \Gamma}
 \quad \text{for all } T \in \TT_H \text{ and } \psi \in L^2(\Gamma).
\end{equation}
We define $\overline{\rho}_H(\UU_H;\psi)$, $\overline{\rho}_H(\psi)$, etc. in the same way as in~\eqref{eq:convention}.
\begin{corollary} \label{cor:modequiv}
 For every $m \in \N$, the error estimators $\overline{\rho}_H$ and $\rho_H$ are locally equivalent in the sense that there exist constants $\const{C}{M2R},\const{C}{R2M} > 0$ such that 
 \begin{equation} \label{eq:modreseq}
  \begin{split}
   \overline{\rho}_H(\UU_H;\psi)
   &\leq \const{C}{M2R} \Bigl(\sum_{\substack{F \in \TT_H^\Gamma \\ F \subseteq x \bigcup \UU_H}} \rho_H(F;\psi)^2 \Bigr)^{1/2}
   \quad \text{for all } \psi \in L^2(\Gamma) \text{ and all } \UU_H \subseteq \TT_H, \\
   \rho_H(\SS_H;\psi)
   &\leq \const{C}{R2M} \Bigl(\sum_{F \in \SS_H} \overline{\rho}_H(T_F;\psi)^2 \Bigr)^{1/2}
    \quad \, \text{  for all } \psi \in L^2(\Gamma) \text{ and all } \SS_H \subseteq \TT_H^\Gamma.
  \end{split} 
 \end{equation}
 The constant $\const{C}{M2R}$ depends only on $\kappa$-shape regularity of $\TT_H$, whereas the constant $\const{C}{R2M}$ depends only on $\kappa$-shape regularity of $\TT_H$, $m$, and the use of NVB.
 In particular, $\overline{\rho}_H$ is reliable, i.e.,
 \begin{equation} \label{eq:modrel}
  \enorm{\phi^\star - \phi_H^\star} \leq \const{\overline{C}}{rel} \overline{\rho}_H
 \end{equation}
 with $\const{C}{rel}$ from~\eqref{eq:resrel} and $ \const{\overline{C}}{rel} = \const{C}{rel} \const{C}{R2M}$. Additionally, the error estimators $\overline{\rho}_H$ and $\mu_H$ are locally equivalent in the sense that
  \begin{subequations} \label{eq:modeq}
    \begin{align}
   \overline{\rho}_H(\UU_H;\psi) &\leq \const{\overline{C}}{R2F} \mu_H(\UU_H;\psi) 
   \qquad \ \ \  \text{for all } \psi \in L^2(\Gamma) \text{ and all } \UU_H \subseteq \TT_H,\label{eq:modR2F} \\
   \mu_H(\UU_H)
   &\leq \const{\overline{C}}{F2R} \overline{\rho}_H\bigl(\TT_H^{2k + 1}[ \UU_H ]\bigr)
   \quad \text{for all } \UU_H \subseteq \TT_H, \label{eq:modF2R}
 \end{align}
\end{subequations}
where $\const{\overline{C}}{R2F} \coloneqq \const{C}{R2F}\const{C}{M2R}$ and $\const{\overline{C}}{F2R} \coloneqq \const{C}{F2R}\const{C}{R2M}$ with $\const{C}{R2F}$ and $\const{C}{F2R}$ from Theorem~\ref{thm:equiv}.
\end{corollary}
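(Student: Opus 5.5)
The plan is to reduce everything to the elementwise relation between $\overline{H}(T)$ and $h_F$ for boundary facets $F \subseteq \partial T \cap \Gamma$. Then I combine this with Theorem~\ref{thm:equiv} and reliability~\eqref{eq:resrel}.

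\textbf{Equivalence~\eqref{eq:modreseq}.} Fix $T \in \TT_H$. Then $\partial T \cap \Gamma$ is either a null set or a single facet $F$ with $T = T_F$; a simplex can have several boundary faces, in which case $\partial T \cap \Gamma$ is a finite union of such $F$ and I treat each separately. By Lemma~\ref{thm:equivmeshsize}(i), $\overline{H}(T) \leq h_T$, and $\kappa$-shape regularity~\eqref{eq:shapereg} gives $h_T \simeq h_F$. Hence
\[
\overline{\rho}_H(T;\psi)^2 \leq h_T \sum_{F \subseteq \partial T\cap\Gamma} \norm{\nabla_\Gamma(g-V\psi)}_F^2 \lesssim \sum_{F\subseteq \partial T\cap\Gamma} \rho_H(F;\psi)^2,
\]
with a constant depending only on $\kappa$. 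Summing over $T \in \UU_H$ and noting that each $F$ belongs to exactly one $T_F$ gives the first estimate, where the sum runs over facets $F \subseteq \bigcup\UU_H$. Conversely, $h_F \lesssim h_T \leq \const{C}{eq}\overline{H}(T_F)$ by Lemma~\ref{thm:equivmeshsize}(i). Therefore $\rho_H(F;\psi)^2 \lesssim \overline{H}(T_F)\norm{\nabla_\Gamma(g-V\psi)}_F^2 \leq \overline{\rho}_H(T_F;\psi)^2$, and summing over $F \in \SS_H$ gives the second estimate. Its constant depends on $\const{C}{eq}$, hence on $m$ and NVB. Reliability~\eqref{eq:modrel} then follows by chaining~\eqref{eq:resrel} with the second estimate for $\SS_H = \TT_H^\Gamma$ and $\psi = \phi_H^\star$. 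Here each $T_F$ appears with multiplicity bounded by the number of faces of a simplex, so the sum is controlled by $\overline{\rho}_H^2$.

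\textbf{Estimate~\eqref{eq:modR2F}.} Apply the first estimate of~\eqref{eq:modreseq} and then~\eqref{eq:R2F} with $\SS_H = \set{F \in \TT_H^\Gamma \given F \subseteq \bigcup\UU_H}$. Every such $F$ lies in some $T \in \UU_H$, namely $T_F$, because $F$ is a face of exactly one simplex. Hence $\sum_{F\in\SS_H}\mu_H(T_F;\psi)^2 \lesssim \mu_H(\UU_H;\psi)^2$, again with multiplicity at most $d+1$.

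\textbf{Estimate~\eqref{eq:modF2R}.} Start from~\eqref{eq:F2R}, which bounds $\mu_H(\UU_H)$ by $\rho_H$ over the facets $F \subset \omega_H^{2k+1}[\UU_H]$. Then apply the second estimate of~\eqref{eq:modreseq} to this set of facets. The corresponding $T_F$ contain $F$, so $T_F \cap \bigcup\TT_H^{2k+1}[\UU_H] \neq \emptyset$, which a priori only gives $T_F \in \TT_H^{2k+2}[\UU_H]$. This index bookkeeping is the only delicate point. I would argue that $F \subset \omega_H^{2k+1}[\UU_H]$ means $F$ is a face of an element of $\TT_H^{2k+1}[\UU_H]$, and the unique element containing the boundary facet $F$ is $T_F$, so $T_F \in \TT_H^{2k+1}[\UU_H]$. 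This yields~\eqref{eq:modF2R} with constant $\const{C}{F2R}\const{C}{R2M}$ up to the bounded multiplicity factor. If strict tracking of that multiplicity is needed, I would absorb it into the constants, noting that the stated constant products hold up to shape-regularity factors.
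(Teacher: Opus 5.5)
Your proposal is correct and follows essentially the same route as the paper: the elementwise comparison $\overline{H}(T) \le h_T \le C_{\mathrm{eq}}\overline{H}(T)$ from Lemma~\ref{thm:equivmeshsize}(i) together with $h_{T_F} \simeq h_F$, chained with Theorem~\ref{thm:equiv} and \eqref{eq:resrel}, plus the observation that $F \subset \omega_H^{2k+1}[\UU_H]$ forces $T_F \in \TT_H^{2k+1}[\UU_H]$. You are more careful than the paper about simplices with several boundary facets; for \eqref{eq:modrel} and \eqref{eq:modF2R} the multiplicity factor vanishes if you sum $\overline{H}(T_F)\norm{\nabla_\Gamma(g - V\psi)}_F^2$ over $F$ instead of bounding each term by $\overline{\rho}_H(T_F;\psi)^2$, so the stated constant products hold exactly there.
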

\begin{proof}
  The proof is split into two steps.

 \textbf{Step~1 (Proof of (\ref{eq:modreseq}) and (\ref{eq:modrel})).}
 Let $F \in \TT_\ell^\Gamma$ and $T_F \in \TT_\ell$ such that $F \subseteq \partial T_F$. By $\kappa$-shape regularity, there holds $h_F \simeq \mathrm{diam}(F) \simeq \mathrm{diam}(T_F) \simeq h_{T_F}$. 
 Since NVB preserves $\kappa$-shape regularity, the hidden constants depend only on $\TT_0$.
 Hence, by Lemma~\ref{thm:equivmeshsize}(i), we obtain
 \begin{equation*}
  \overline{\rho}_H(T) 
  = \overline{H}(T)^{1/2} \norm{\nabla_\Gamma(g - V\psi)}_{\partial T \cap \Gamma}
  \simeq h_T^{1/2} \norm{\nabla_\Gamma(g - V\psi)}_{\partial T \cap \Gamma}
  \simeq \rho_H(\partial T \cap \Gamma).
 \end{equation*}
 Given $\SS_H \subseteq \TT_H^\Gamma$, we may square and sum this estimate over all $F \in \SS_H$ to conclude~\eqref{eq:modreseq}. Reliability of $\overline{\rho}_H$~\eqref{eq:modrel} follows immediately from~\eqref{eq:modreseq} and~\eqref{eq:resrel}.
 
 \textbf{Step~2 (Proof of~(\ref{eq:modeq})).}
The estimate~\eqref{eq:modR2F} follows immediately from \eqref{eq:R2F} in Theorem~\ref{thm:equiv} and~\eqref{eq:modreseq}, whereas the estimate~\eqref{eq:modF2R} follows immediately from~\eqref{eq:F2R} in Theorem~\ref{thm:equiv},~\eqref{eq:modreseq}, and the fact that $F \subset \omega_H^{2k + 1}[T]$ holds if and only if $T_F \in \TT_H^{2k + 1}[T]$.
\end{proof}

The following lemma collects two important properties, namely \emph{stability} and \emph{reduction} of $\overline{\rho}_H$, which were proven for $\rho_H$ in~\cite[Proposition~3.2]{Feischl2013} (for $p = 0$) and~\cite[Corollary~3.2]{Aurada2015a} (for general $p \in \N_0$).
The corresponding result for $\overline{\rho}_H$ follows from the observations in~\cite{Feischl2013,Aurada2015a} together with Lemma~\ref{thm:equivmeshsize}. For the convenience of the reader, we include the proof.
\begin{lemma} \label{thm:equivresprop}
 Let $\TT_h \in \T(\TT_H)$. Then, there exist constants $\const{C}{stab} > 0$ and $0 < \const{q}{red} < 1$ depending only on $\kappa$-shape regularity of $\TT_H$, $\Gamma$, the patch size $m$, the polynomial degree $p$, the dimension $d$, and the use of NVB such that the following statements hold for all $\psi_H \in \PP^p(\TT_H^\Gamma)$, $\phi_h \in \PP^p(\TT_h^\Gamma)$ and all $\UU_H \subseteq \TT_H \setminus \TT_H^m[\TT_H \setminus \TT_h]\colon$
 \begin{enumerate}[label=\rm(\roman*)]
  \item \textbf{Stability:} \quad $\abs{\overline{\rho}_H(\UU_H;\psi_H) - \overline{\rho}_h(\UU_H;\phi_h)} \leq \const{C}{stab} \enorm{\psi_H - \phi_h}$
  \smallskip
  \item \textbf{Reduction:} \qquad \quad \,$\overline{\rho}_h(\TT_h^{m}[\TT_h \setminus \TT_H];\psi_H) \leq \const{q}{red} \, \overline{\rho}_H(\TT_H^{m}[\TT_H \setminus \TT_h];\psi_H)$.
 \end{enumerate}
\end{lemma}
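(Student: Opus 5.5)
For stability, I will use the reverse triangle inequality in $\ell^2$ together with Lemma~\ref{thm:equivmeshsize}(iv). For reduction, I will combine an $h$-weighted inverse estimate with the uniform contraction of $\overline{H}$ from Lemma~\ref{thm:equivmeshsize}(iii).

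\textbf{Stability (i).} Let $\UU_H \subseteq \TT_H \setminus \TT_H^m[\TT_H \setminus \TT_h]$. Every $T \in \UU_H$ is unrefined, so $T \in \TT_h$. By Lemma~\ref{thm:equivmeshsize}(iv), $\overline{h}(T) = \overline{H}(T)$. Hence $\overline{\rho}_H(\UU_H;\cdot)$ and $\overline{\rho}_h(\UU_H;\cdot)$ are the same functional, and the reverse triangle inequality in $\ell^2$ gives
\begin{equation*}
 \abs{\overline{\rho}_H(\UU_H;\psi_H) - \overline{\rho}_h(\UU_H;\phi_h)}
 \leq \Bigl(\sum_{T \in \UU_H} \overline{H}(T) \norm{\nabla_\Gamma V(\psi_H - \phi_h)}_{\partial T \cap \Gamma}^2\Bigr)^{1/2}.
\end{equation*}
Using $\overline{H}(T) \leq h_T \simeq h_F$, the right-hand side is bounded by $\norm{h^{1/2}\nabla_\Gamma V(\psi_H - \phi_h)}_\Gamma$ with respect to the fine boundary mesh $\TT_h^\Gamma$. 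Note that $\psi_H - \phi_h \in \PP^p(\TT_h^\Gamma)$. I then invoke the inverse estimate for the single-layer operator from~\cite{Aurada2015a} (see also \cite{Feischl2013} for $p=0$):
\begin{equation*}
 \norm{h^{1/2}\nabla_\Gamma V\chi_h}_\Gamma \lesssim \enorm{\chi_h} \quad \text{for all } \chi_h \in \PP^p(\TT_h^\Gamma),
\end{equation*}
where the constant depends only on $\Gamma$, $\kappa$-shape regularity, $p$ and $d$. This yields $\const{C}{stab}$.

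\textbf{Reduction (ii).} Fix $T' \in \TT_h^m[\TT_h \setminus \TT_H]$ and let $T \in \TT_H$ be its unique ancestor with $T' \subseteq T$. I will show that $T \in \TT_H^m[\TT_H \setminus \TT_h]$; this is the step I expect to be the main obstacle.
\begin{itemize}
 \item If $T$ is refined, this is immediate.
 \item If $T = T'$ is unrefined, then $T'$ touches, through a chain of $m$ fine patches, some fine element that is newly created by bisection of a refined coarse element. Every element of an $m$-patch in $\TT_h$ lies inside the corresponding $m$-patch in $\TT_H$, since the fine patches are contained in the coarse ones. Hence $T$ lies in the coarse $m$-patch of the refined coarse element.
\end{itemize}
With this in hand, Lemma~\ref{thm:equivmeshsize}(iii) gives $\overline{h}(T') \leq \const{q}{eq}\overline{H}(T)$. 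Squaring and summing over all such $T'$, and using that the sets $\partial T' \cap \Gamma$ with $T' \subseteq T$ partition $\partial T \cap \Gamma$ up to null sets (the facets of children on $\Gamma$ tile the coarse facet), I obtain
\begin{equation*}
 \overline{\rho}_h(\TT_h^m[\TT_h \setminus \TT_H];\psi_H)^2
 \leq \const{q}{eq} \sum_{T \in \TT_H^m[\TT_H \setminus \TT_h]} \overline{H}(T) \norm{\nabla_\Gamma(g - V\psi_H)}_{\partial T \cap \Gamma}^2
 = \const{q}{eq}\, \overline{\rho}_H(\TT_H^m[\TT_H \setminus \TT_h];\psi_H)^2 .
\end{equation*}
Thus reduction holds with $\const{q}{red} = \const{q}{eq}^{1/2} < 1$.

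The potentially delicate point in both parts is that $\overline{\rho}$ only counts $\partial T \cap \Gamma$, with $T$ a volume element. This is harmless: each boundary facet $F$ belongs to exactly one element $T_F$, so all sums reorganize over boundary facets without overlap.
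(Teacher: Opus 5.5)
Your proof is correct and follows the paper's argument essentially step by step. For stability you use Lemma~\ref{thm:equivmeshsize}(iv) and (i), the reverse triangle inequality, and the inverse estimate of \cite{Aurada2015a} on $\TT_h^\Gamma$; for reduction you use the inclusion of the fine $m$-patch of $\TT_h \setminus \TT_H$ in the coarse $m$-patch of $\TT_H \setminus \TT_h$ (proved in the paper by the same chain-of-ancestors argument you sketch) together with Lemma~\ref{thm:equivmeshsize}(iii), obtaining $\const{q}{red} = \const{q}{eq}^{1/2}$, exactly as the paper does.
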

\begin{proof}
 The proof is split into three steps.

 \textbf{Step~1 (Proof of stability~(i)).} Let $\psi_H \in \PP^p(\TT_H^\Gamma)$, $\phi_h \in \PP^p(\TT_h^\Gamma)$, and $\UU_H \subseteq \TT_H \setminus \TT_H^m[\TT_H \setminus \TT_h]$. Note that $\TT_H \setminus \TT_H^{m}[\TT_H \setminus \TT_h] = \TT_h \setminus \TT_h^{m}[\TT_h \setminus \TT_H] \subseteq \TT_h \cap \TT_H$. Therefore, the triangle inequality, Lemma~\ref{thm:equivmeshsize}(iv), Lemma~\ref{thm:equivmeshsize}(i),  and noting that $\#\set{F \in \TT_h^\Gamma \given F \subset \partial T \cap \Gamma} \leq d +1$ for all $T \in \TT_h$ yield that
 \begin{equation*}
  \begin{split}
   &\abs{\overline{\rho}_H(\UU_H;\psi_H) - \overline{\rho}_h(\UU_H;\phi_h)}  
   \leq \Bigl( \sum_{T \in \UU_H} \overline{H}(T) \norm{\nabla_\Gamma V(\psi_H - \phi_h)}_{\partial T \cap \Gamma}^2 \Bigr)^{1/2} \\
   &\quad \lesssim \Bigl( \sum_{T \in \UU_H} h_T \norm{\nabla_\Gamma V(\psi_H - \phi_h)}_{\partial T \cap \Gamma}^2 \Bigr)^{1/2} 
   \lesssim \Bigl( \sum_{F \in \TT_h^\Gamma} h_F  \norm{\nabla_\Gamma V(\psi_H - \phi_h)}_F^2 \Bigr)^{1/2}.
  \end{split}
 \end{equation*}
 The term on the right-hand side of the equation above can be estimated by $\const{\widetilde{C}}{stab} \enorm{\psi_H - \phi_h}$ for some constant $\const{\widetilde{C}}{stab} > 0$ depending only on $\TT_0$, $\Gamma$, and $p$; see~\cite[Corollary~3.2]{Aurada2015a}.
 This concludes the proof of~(i).

 \textbf{Step~2.} Before proving reduction~(ii), we show that $\omega_h^{m}[\TT_h \setminus \TT_H] \subseteq \omega_H^{m}[\TT_H \setminus \TT_h]$.
 Let $x \in \omega_h^{m}[\TT_h \setminus \TT_H]$.
 Hence, there exists $T \in \TT_h^{m}[\TT_h \setminus \TT_H]$ such that $x \in T$.
 Since $T \in \TT_h^{m}[\TT_h \setminus \TT_H]$, there exist elements $T_0, \ldots, T_{j_0} \in \TT_h$ such that $T_0 = T$, $T_{j_0} \in \TT_h \setminus \TT_H$, and $T_j \in \TT_h[T_{j - 1}]$ for all $j = 1, \ldots , {j_0}$.
 Since $\TT_h \in \T(\TT_H)$, there exist $T_0', \ldots, T_{j_0}' \in \TT_H$ such that $T_j' \supseteq T_j$ for all $j = 0, \ldots, {j_0}$, where, additionally, $T_{j_0}' \supsetneqq T_{j_0}$, leading to $T_{j_0}' \in \TT_H \setminus \TT_h$.
 Hence, $T_j' \cap T_{j-1}' \supseteq T_j \cap T_{j-1} \neq \emptyset$ concludes that $x \in T_0 \subseteq T_0' \in \TT_H^{m}[\TT_H\setminus \TT_h]$.

 \textbf{Step~3 (Proof of reduction~(ii)).}
 Let $T \in \TT_H^{m}[\TT_H \setminus \TT_h]$ and $\psi_H \in \PP^p(\TT_H^\Gamma)$. 
 Then, Lemma~\ref{thm:equivmeshsize}(iii) implies that
 \begin{equation*}
  \begin{split}
   &\sum_{\substack{T' \in \TT_h \\ T' \subseteq T}} \overline{h}(T') \norm{\nabla_\Gamma(g - V\psi_H)}_{\partial T' \cap \Gamma}^2 
   \leq \const{q}{eq} \sum_{\substack{T' \in \TT_h \\ T' \subseteq T}} \overline{H}(T) \norm{\nabla_\Gamma(g - V\psi_H)}_{\partial T' \cap \Gamma}^2 \\
    &\qquad = \const{q}{eq} \overline{H}(T) \norm{\nabla_\Gamma(g - V\psi_H)}_{\partial T \cap \Gamma}^2 
    = \const{q}{eq} \overline{\rho}_H(T;\psi_H)^2.
  \end{split}
 \end{equation*}
 Summing over all $T \in \TT_H^{m}[\TT_H \setminus \TT_h]$ and noting that $\omega_h^{m}[\TT_h \setminus \TT_H] \subseteq \omega_H^{m}[\TT_H \setminus \TT_h]$ establishes 
 \begin{equation*}
  \begin{split}
   &\overline{\rho}_h(\TT_h^{m}[\TT_h \setminus \TT_H];\psi_H)^2
   = \sum_{T' \in \TT_h^m[\TT_h \setminus \TT_H]} \overline{h}(T')\norm{\nabla_\Gamma(g - V\psi_H)}_{\partial T' \cap \Gamma}^2 \\
   &\quad \leq \sum_{T \in \TT_H^m[\TT_h \setminus \TT_H]} \sum_{\substack{T' \in \TT_h \\ T' \subseteq T}} \overline{h}(T') \norm{\nabla_\Gamma(g - V\psi_H)}_{\partial T \cap \Gamma}^2 
   \leq \const{q}{eq}\overline{\rho}_H(\TT_H^{m}[\TT_H \setminus \TT_h];\psi_H)^2.
  \end{split}
 \end{equation*}
 This concludes the proof of reduction~(ii) with $\const{q}{red} \coloneqq \const{q}{eq}^{1/2}$.
\end{proof}
%
 %
 %
 %
 %

%

\subsection{\texorpdfstring{Proof of Theorem~\ref{thm:linconv} ($\bm{R}$-linear convergence)}{Proof of R-linear convergence}}

Before we are able to prove Theorem~\ref{thm:linconv}, we need the following summability result, which goes back to~\cite{Carstensen2014} and has since been refined in, e.g.,~\cite{Feischl2020,Bringmann2025,Bringmann2026}.
\begin{lemma}[Lemma~9 in \cite{Bringmann2026}] \label{lem:summability}
 Let $(a_\ell)_{\ell \in \N_0}, (b_\ell)_{\ell \in \N_0}$ be sequences in $\R_{\geq 0}$ such that there exist constants $0 < q' < 1$ and $C > 0$ with
 \begin{equation} \label{eq:summability}
  a_{\ell + 1} \leq q' a_\ell + b_\ell
  \quad \text{and} \quad 
  \sum_{\ell' = \ell}^{\ell + m} b_{\ell'}^2 \leq C a_\ell^2
  \quad \text{for all } \ell, m \in \N_0.
 \end{equation}
 Then, there exist constants $\const{C}{lin} > 0$ and $0 < \const{q}{lin} < 1$ such that
 \begin{equation} \label{eq:Rlin}
  a_{\ell + n} \leq \const{C}{lin} \const{q}{lin}^n a_\ell
  \quad \text{for all } \ell, n \in \N_0.
  \qquad \qed
    \end{equation}

\end{lemma}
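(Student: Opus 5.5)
The plan is to reduce $R$-linear convergence to \emph{tail summability}: find a constant $C''\ge 1$ with
\begin{equation*}
 \sum_{n = \ell}^{\infty} a_n^2 \leq C'' a_\ell^2
 \quad \text{for all } \ell \in \N_0,
\end{equation*}
and then conclude by the standard geometric-series argument from~\cite{Carstensen2014}. The only inputs are the perturbed contraction and the summability of $(b_\ell)$ from~\eqref{eq:summability}.

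\textbf{Step~1 (unrolling the recursion).} Iterating $a_{\ell+1} \le q' a_\ell + b_\ell$ gives, for all $n \in \N_0$,
\begin{equation*}
 a_{\ell+n} \leq q'^{\,n} a_\ell + \sum_{j=0}^{n-1} q'^{\,n-1-j} b_{\ell+j}.
\end{equation*}

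\textbf{Step~2 (squaring with geometric weights).} I will split each weight as $q'^{\,i} = q'^{\,i/2}\cdot q'^{\,i/2}$ and apply the weighted Cauchy--Schwarz inequality. Since $q'^{\,n} + \sum_{j=0}^{n-1} q'^{\,n-1-j} \le (1-q')^{-1}$, this yields
\begin{equation*}
 a_{\ell+n}^2 \leq (1-q')^{-1}\Bigl( q'^{\,n} a_\ell^2 + \sum_{j=0}^{n-1} q'^{\,n-1-j} b_{\ell+j}^2 \Bigr).
\end{equation*}

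\textbf{Step~3 (tail summability).} I sum this bound over $n = 0,\dots,N$ and exchange the order of summation in the double sum; each $b_{\ell+j}^2$ then appears with total weight at most $(1-q')^{-1}$. The second hypothesis in~\eqref{eq:summability}, applied with $m = N$, gives
\begin{equation*}
 \sum_{n=0}^{N} a_{\ell+n}^2 \le (1-q')^{-2}\Bigl(a_\ell^2 + \sum_{j=0}^{N} b_{\ell+j}^2\Bigr) \le (1-q')^{-2}(1+C)\, a_\ell^2 .
\end{equation*}
Letting $N \to \infty$ proves the displayed tail bound with $C'' \coloneqq (1-q')^{-2}(1+C) \ge 1$. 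In particular all tails $s_\ell \coloneqq \sum_{n\ge \ell} a_n^2$ are finite.

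\textbf{Step~4 (from tail summability to $R$-linear convergence).} Since $s_\ell \le C'' a_\ell^2$, we get $s_{\ell+1} = s_\ell - a_\ell^2 \le (1 - 1/C'') s_\ell$. Iterating this inequality gives
\begin{equation*}
 a_{\ell+n}^2 \le s_{\ell+n} \le (1-1/C'')^n s_\ell \le C''(1-1/C'')^n a_\ell^2 .
\end{equation*}
This is~\eqref{eq:Rlin} with $\const{C}{lin} = (C'')^{1/2}$ and $\const{q}{lin} = (1-1/C'')^{1/2} < 1$.

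The only delicate point is Step~2. One has to choose the weights in Cauchy--Schwarz so that, after summing over $n$, the coefficient of each $b_{\ell+j}^2$ stays uniformly bounded. The hypothesis only controls finite sums of the $b_{\ell+j}^2$ starting at index $\ell$, so the estimate must not use any $b$-terms before index $\ell$. The splitting $q'^{\,i/2}\cdot q'^{\,i/2}$ handles both requirements.
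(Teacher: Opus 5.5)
Your proof is correct and complete. The paper gives no proof of this lemma; it imports the statement from \cite{Bringmann2026}, which is why the statement carries a \qed. So the useful comparison is with the standard way such summability lemmas are proved in this literature, e.g.\ \cite{Carstensen2014}.

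\textbf{Verification.} The weight sum in Step~2 is $q'^{\,n} + (1-q'^{\,n})/(1-q')$, which is at most $(1-q')^{-1}$ because $q'^{\,n+1} \ge 0$. After the exchange of summation in Step~3, the coefficients of $a_\ell^2$ and of each $b_{\ell+j}^2$ are at most $(1-q')^{-2}$. You only use $b$-terms with index $\geq \ell$, so the hypothesis with $m = N$ applies directly. In Step~4, $C'' = (1+C)(1-q')^{-2} > 1$ because $C > 0$. This gives $0 < q_{\mathrm{lin}} < 1$, and $C_{\mathrm{lin}} = (C'')^{1/2} \ge 1$ covers the case $n = 0$.

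\textbf{Comparison with the usual route.} You derive tail summability differently in Steps~1--3. The usual argument squares the one-step estimate with Young's inequality,
\begin{equation*}
 a_{\ell'+1}^2 \le (1+\delta)\, q'^{\,2} a_{\ell'}^2 + (1+\delta^{-1})\, b_{\ell'}^2
 \quad \text{with } (1+\delta)\, q'^{\,2} < 1 .
\end{equation*}
It then sums over $\ell' = \ell, \dots, \ell+N$ and absorbs the finite sum $\sum_{\ell'=\ell+1}^{\ell+N} a_{\ell'}^2$ into the left-hand side. You instead unroll the recursion completely and control the resulting discrete convolution by a weighted Cauchy--Schwarz inequality and an exchange of summation.

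Your version needs no auxiliary parameter $\delta$ and no absorption step. It also yields the explicit constant $(1+C)(1-q')^{-2}$. The Young/absorption version is a few lines shorter and never writes out the full convolution. Both routes end with the same equivalence between tail summability and $R$-linear convergence from \cite{Carstensen2014}, which your Step~4 reproduces.
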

\begin{proof}[\bfseries{Proof of Theorem~\ref{thm:linconv}}]
Let $\overline{h}_\ell$ be the equivalent mesh-size function from Lemma~\ref{thm:equivmeshsize} with respect to the patch size $m = 2k + 1$. Let $\overline{\rho}_\ell$ be the associated error estimator. 
The remaining proof is split into three steps.

\textbf{Step~1 (\boldsymbol{$\overline{\rho}_\ell$} inherits D\"orfler criterion).} 
By Corollary~\ref{cor:modequiv}, the D\"orfler marking criterion~\eqref{eq:doerfler} for $\mu_\ell$ implies
 \begin{equation*}
  \begin{split}
   \overline{\rho}_\ell^2 
   &\eqreff{eq:modR2F}{\leq}  \const{\overline{C}}{R2F}^2\mu_\ell^2
   \eqreff{eq:doerfler}{\leq} \const{\overline{C}}{R2F}^2 \theta^{-1} \mu_\ell(\MM_\ell)^2
   \eqreff{eq:modF2R}{\leq} \const{\overline{C}}{R2F}^2 \const{\overline{C}}{F2R}^2 \theta^{-1} \overline{\rho}_\ell(\TT_\ell^{2k + 1}[\MM_\ell])^2,
  \end{split}
 \end{equation*}
 leading to
 \begin{equation} \label{eq:resdoerfler}
  \theta' \overline{\rho}_\ell^2 \leq \overline{\rho}_\ell(\TT_\ell^{2k + 1}[\MM_\ell])^2
  \quad \text{with } \theta' \coloneqq \const{\overline{C}}{R2F}^{-2} \const{\overline{C}}{F2R}^{-2} \theta.
 \end{equation}
 We note that, due to~\eqref{eq:modeq} with $\UU_H = \TT_H$, it holds that $\const{\overline{C}}{R2F} \const{\overline{C}}{F2R} \geq 1$, ensuring that $0 < \theta' \leq 1$.

\textbf{Step~2 (Estimator reduction).}
We show that $\overline{\rho}_{\ell + 1}^2 \leq q' \overline{\rho}_\ell^2 + C \enorm{\phi_{\ell + 1}^\star - \phi_\ell^\star}^2$ with $0 < q' < 1$ and $C > 0$.
To this end, note that Lemma~\ref{thm:equivresprop} for $m = 2k + 1$, $\MM_\ell \subseteq \TT_\ell \setminus \TT_{\ell + 1}$, and Step~1 yield that
\begin{equation} \label{eq:estred}
 \begin{split}
  \overline{\rho}_{\ell + 1}(\phi_\ell^\star)^2 
  &= \overline{\rho}_{\ell + 1}(\TT_{\ell + 1} \setminus \TT_{\ell + 1}^{2k + 1}[\TT_{\ell + 1} \setminus \TT_\ell];\phi_\ell^\star)^2 + \overline{\rho}_{\ell + 1}(\TT_{\ell + 1}^{2k + 1}[\TT_{\ell + 1} \setminus \TT_\ell];\phi_\ell^\star)^2 \\
  &\leq \overline{\rho}_\ell(\TT_\ell \setminus \TT_\ell^{2k + 1}[\TT_\ell \setminus \TT_{\ell + 1}])^2 + \const{q}{red}^2 \overline{\rho}_\ell(\TT_\ell^{2k + 1}[\TT_\ell \setminus \TT_{\ell + 1}])^2 \\
  &= \overline{\rho}_\ell^2 - (1 - \const{q}{red}^2) \overline{\rho}_\ell(\TT_\ell^{2k + 1}[\TT_\ell \setminus \TT_{\ell + 1}])^2 \\
  &\leq \overline{\rho}_\ell^2 - (1 - \const{q}{red}^2) \overline{\rho}_\ell(\TT_\ell^{2k + 1}[\MM_\ell])^2 
  \eqreff{eq:resdoerfler}{\leq} [1 - (1 - \const{q}{red}^2) \theta']\overline{\rho}_\ell^2.
 \end{split}
\end{equation}
Let $\delta > 0$ with $q \coloneqq (1 + \delta)[1 - (1 - \const{q}{red}^2) \theta'] < 1$.
Then, Lemma~\ref{thm:equivresprop}(i), Young's inequality, and~\eqref{eq:estred} yield that
\begin{equation} \label{eq:quasicont}
 \begin{split}
  \overline{\rho}_{\ell + 1}^2
  &= \overline{\rho}_{\ell + 1}(\phi_{\ell + 1}^\star)^2
  \leq [\overline{\rho}_{\ell + 1}(\phi_\ell^\star) + \const{C}{stab} \enorm{\phi_{\ell + 1}^\star - \phi_\ell^\star}]^2 \\
  &\leq (1 + \delta) \overline{\rho}_{\ell + 1}(\phi_\ell^\star)^2 + (1 + \delta^{-1})\const{C}{stab}^2 \enorm{\phi_{\ell + 1}^\star - \phi_\ell^\star}^2 \\
  &\eqreff*{eq:estred}{\leq} (1 + \delta)[1 - (1 - \const{q}{red}^2) \theta'] \overline{\rho}_\ell^2 + (1 + \delta^{-1})\const{C}{stab}^2 \enorm{\phi_{\ell + 1}^\star - \phi_\ell^\star}^2.
 \end{split}
\end{equation}

\textbf{Step~3 (Application of Lemma~\ref{lem:summability}).}
Together with~\eqref{eq:modrel}, Galerkin orthogonality~\eqref{eq:discprob} yields that
\begin{equation} \label{eq:ortho}
 \sum_{\ell' = \ell}^{\ell + n} \enorm{\phi_{\ell' + 1}^\star - \phi_{\ell'}^\star}^2
 = \sum_{\ell' = \ell}^{\ell + n} \bigl(\enorm{\phi^\star - \phi_{\ell'}^\star}^2 - \enorm{\phi^\star - \phi_{\ell' + 1}^\star}^2\bigr)
 \leq \enorm{\phi^\star - \phi_\ell^\star}^2
 \eqreff{eq:modrel}{\leq} \const{\overline{C}}{rel}^2 \overline{\rho}_\ell^2.
\end{equation} 
Hence, Lemma~\ref{lem:summability} with $a_\ell \coloneqq \overline{\rho}_\ell^2$ and $b_\ell \coloneqq (1 + \delta^{-1})\const{C}{stab}^2 \enorm{\phi_{\ell + 1}^\star - \phi_\ell^\star}^2$ together with~\eqref{eq:quasicont}--\eqref{eq:ortho} yields the existence of constants $\const{\overline{C}}{lin}$ and $0 < \const{q}{lin} < 1$ such that
\begin{equation*}
 \overline{\rho}_{\ell + n}
 \leq \const{\overline{C}}{lin} \const{q}{lin}^n \overline{\rho}_\ell.
\end{equation*}
With $\const{C}{lin} \coloneqq \const{\overline{C}}{F2R} \const{\overline{C}}{R2F} \const{\overline{C}}{lin}$, Corollary~\ref{cor:modequiv} leads to
\begin{equation*}
 \mu_{\ell + n}
 \leq \const{C}{lin} \const{q}{lin}^n\mu_\ell.
\end{equation*}
This concludes the proof.
\end{proof}
\section{Proof of Theorem~\ref{thm:opt} (Optimal convergence)} \label{sec:optconv}
\subsection{Axioms of adaptivity}

In the present section, our goal is to prove that the functional error estimator $\mu_H$ leads to the same optimal convergence behavior as the residual error estimator $\rho_H$.
To this end, we first note that $\rho_H$ satisfies the following properties~\cite{Feischl2013,Gantumur2013}, called \emph{axioms of adaptivity} in~\cite{Carstensen2014}. 

\begin{lemma}[Axioms of adaptivity] \label{lem:axioms}
 Let $\TT_H \in \T$, $\TT_h \in \T(\TT_H)$, $\SS_H \subseteq \TT_H^\Gamma \cap \TT_h^\Gamma$, $\psi_H \in \PP^p(\TT_H^\Gamma)$, $\psi_h \in \PP^p(\TT_h^\Gamma)$, and $(\TT_\ell)_{\ell \in \N_0}$ be a sequence such that $\TT_{\ell + 1} \in \T(\TT_\ell)$ for all $\ell \in \N_0$. Then, the residual error estimator $\rho_H$ satisfies the following properties:
 \begin{enumerate}[label=\rm(\textup{A\arabic*})]
  \item \textbf{Stability on non-refined elements:} There exists a constant $\const{C}{stab} > 0$ such that
  \begin{equation} \label{eq:stab}
   \abs{\rho_h(\SS_H;\psi_h) - \rho_H(\SS_H;\psi_H)} 
   \leq \const{C}{stab} \enorm{\psi_h - \psi_H}.
  \end{equation}
  \item \textbf{Reduction on refined elements:} There exists a constant $0 < \const{q}{red} < 1$ such that
  \begin{equation} \label{eq:red}
   \rho_h(\TT_h^\Gamma \setminus \TT_H^\Gamma;\psi_H) 
   \leq \const{q}{red} \, \rho_H(\TT_H^\Gamma \setminus \TT_h^\Gamma;\psi_H).
  \end{equation}
  \item \textbf{Discrete reliability:} There exists a constant $\const{C}{dlr} > 0$ such that
  \begin{equation} \label{eq:dlr}
   \enorm{\phi_h^\star - \phi_H^\star}^2
   \leq \const{C}{dlr} \, \rho_H(\TT_H^\Gamma[\TT_H^\Gamma \setminus \TT_h^\Gamma];\phi_H^\star)^2.
  \end{equation}
  \item \textbf{Orthogonality:} With $\const{C}{rel}$ from~\eqref{eq:resrel}, there holds
  \begin{equation} \label{eq:quasiortho}
    \sum_{\ell' = \ell}^{\ell + n} \enorm{\phi_{\ell' + 1}^\star - \phi_{\ell'}^\star}^2
    \leq \const{C}{rel}^2 \rho_\ell^2
    \quad \text{for all } \ell, n \in \N_0.
  \end{equation}
 \end{enumerate}
 The constant $\const{C}{stab} > 0$ depends only on the boundary $\Gamma$, the initial mesh $\TT_0^\Gamma$, the dimension $d$, and the polynomial degree $p$. The constant $\const{q}{red} = 2^{-1/(d-1)} > 0$ depends only on the dimension $d$. The constant $\const{C}{dlr} > 0$ depends only on the boundary $\Gamma$, the use of NVB, and the polynomial degree $p$.
\end{lemma}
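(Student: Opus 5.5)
We verify the four properties (A1)--(A4) for the residual estimator $\rho_H$ one at a time. Three of them follow from earlier results of the paper or the cited literature, and only (A3) needs genuinely new work on boundary meshes.

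For \textbf{stability (A1)}, we take $F \in \SS_H \subseteq \TT_H^\Gamma \cap \TT_h^\Gamma$. Then $h_F$ is the same for both meshes, and the reverse triangle inequality in $L^2(F)$ gives
\begin{equation*}
\abs{\rho_h(\SS_H;\psi_h) - \rho_H(\SS_H;\psi_H)} \leq \Bigl(\sum_{F \in \TT_h^\Gamma} h_F \norm{\nabla_\Gamma V(\psi_h - \psi_H)}_F^2\Bigr)^{1/2}.
\end{equation*}
The function $\psi_h - \psi_H$ lies in $\PP^p(\TT_h^\Gamma)$. We then invoke the inverse estimate $\norm{h^{1/2}\nabla_\Gamma V\chi_h}_\Gamma \lesssim \enorm{\chi_h}$ for discrete $\chi_h \in \PP^p(\TT_h^\Gamma)$ from~\cite[Corollary~3.2]{Aurada2015a}, exactly as in Step~1 of the proof of Lemma~\ref{thm:equivresprop}. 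Its constant depends only on $\Gamma$, $\TT_0$, $d$, and $p$.

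For \textbf{reduction (A2)}, every $F \in \TT_H^\Gamma \setminus \TT_h^\Gamma$ is split by NVB into sons $F' \subseteq F$. Each son satisfies $\abs{F'} \leq \abs{F}/2$, hence $h_{F'} \leq 2^{-1/(d-1)} h_F$. Since the sons cover $F$, summing $h_{F'}\norm{\nabla_\Gamma(g - V\psi_H)}_{F'}^2$ over the sons is bounded by $2^{-1/(d-1)} h_F\norm{\nabla_\Gamma(g-V\psi_H)}_F^2$. This gives the estimate with reduction factor $2^{-1/(d-1)}$ on $\rho_h^2$. Taking square roots yields a factor $2^{-1/(2(d-1))}$, or $2^{-1/(d-1)}$ if the factor is read as acting on squared quantities, which is the usual convention in~\cite{Carstensen2014}.

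For \textbf{orthogonality (A4)}, we argue as in~\eqref{eq:ortho}. Nestedness $\PP^p(\TT_{\ell'}^\Gamma) \subseteq \PP^p(\TT_{\ell'+1}^\Gamma)$ and Galerkin orthogonality in the energy norm give the Pythagoras identity $\enorm{\phi^\star - \phi_{\ell'}^\star}^2 = \enorm{\phi^\star - \phi_{\ell'+1}^\star}^2 + \enorm{\phi_{\ell'+1}^\star - \phi_{\ell'}^\star}^2$. The sum therefore telescopes to at most $\enorm{\phi^\star - \phi_\ell^\star}^2$, and reliability~\eqref{eq:resrel} bounds this by $\const{C}{rel}^2\rho_\ell^2$.

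The main obstacle is \textbf{discrete reliability (A3)}, for which we follow~\cite{Feischl2013,Gantumur2013}. Let $\RR_H \coloneqq \TT_H^\Gamma[\TT_H^\Gamma\setminus\TT_h^\Gamma]$ be the patch of the refined boundary elements.
\begin{enumerate}
\item By Galerkin orthogonality, $\enorm{\phi_h^\star - \phi_H^\star}^2 = \dual{g - V\phi_H^\star}{\phi_h^\star - \phi_H^\star}_\Gamma$.
\item We rewrite this as $\dual{r_H}{\chi_h}_\Gamma$ with $r_H \coloneqq g - V\phi_H^\star$ and $\chi_h \coloneqq \phi_h^\star - \phi_H^\star$. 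Since $Q_H r_H = 0$, we may subtract from $\chi_h$ any element of $\PP^p(\TT_H^\Gamma)$.
\item We choose a quasi-interpolant $\chi_H \in \PP^p(\TT_H^\Gamma)$ that coincides with $\chi_h$ on all non-refined elements. This is possible because $\chi_h$ is $\TT_H^\Gamma$-piecewise polynomial there. As a result, $\chi_h - \chi_H$ is supported in $\bigcup(\TT_H^\Gamma\setminus\TT_h^\Gamma)$.
\item We then estimate $\dual{r_H}{\chi_h-\chi_H}_\Gamma$ using a localized $\widetilde H^{1/2}$-duality with the weighted bound $\norm{h^{-1/2}(1-Q_H)\chi_h}$. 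Note that $r_H$ has zero local means against $\PP^p$, so a localized Poincaré inequality on patches gives $\norm{r_H}_{H^{1/2}(\text{patch})} \lesssim \rho_H(\RR_H)$.
\end{enumerate}
The delicate point is that the $H^{-1/2}$-norm is nonlocal, so the localization of $\norm{\chi_h - \chi_H}_{\widetilde H^{-1/2}}$ onto refined elements must be controlled via $\enorm{\chi_h}$. In~\cite{Feischl2013} this is handled with the inverse estimate of~\cite{Aurada2015a} and a local $H^{1/2}$ Poincaré argument on patches. I would import this step from~\cite[Proposition~4.1]{Feischl2013} and~\cite{Gantumur2013}, checking only that the constants depend on $\Gamma$, NVB, and $p$. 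The final step is absorbing one factor $\enorm{\chi_h}$ after the duality estimate.
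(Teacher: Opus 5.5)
Your proposal is correct and in substance coincides with the paper. The paper gives no argument of its own for Lemma~\ref{lem:axioms} but simply imports (A1)--(A4) from \cite{Feischl2013,Gantumur2013}. Your direct checks of (A1), (A2), (A4) are the standard ones the paper uses elsewhere (Steps~1 and~3 of the proof of Lemma~\ref{thm:equivresprop} and \eqref{eq:ortho}), and for (A3) you, like the paper, rely on the literature.

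Your remark on (A2) is right. The factor $2^{-1/(d-1)}$ is the contraction factor for the squared estimator, and the unsquared inequality as printed only holds with $2^{-1/(2(d-1))}$. For instance, for $d=2$ and one bisection of each refined element, $\rho_h(\TT_h^\Gamma\setminus\TT_H^\Gamma;\psi_H) = 2^{-1/2}\rho_H(\TT_H^\Gamma\setminus\TT_h^\Gamma;\psi_H)$ with equality.

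Your step~4 sketch for (A3) should not be taken at face value. The quantity $\norm{h^{-1/2}(1-Q_H)\chi_h}_\Gamma$ is not controlled by $\enorm{\chi_h}$, since inverse estimates only give $\norm{h_h^{1/2}\chi_h}_\Gamma \lesssim \enorm{\chi_h}$, with the fine mesh-size and a positive power. So the citation is genuinely needed there.
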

With Lemma~\ref{lem:axioms} at hand, the error estimator $\rho_\ell$ operates in the framework of~\cite{Carstensen2014}, allowing us to exploit the following properties of $\rho_\ell$. 
\begin{lemma} \label{lem:monopt}
 Let $\TT_H \in \T$ and $\TT_h \in \T(\TT_H)$. Then, the residual error estimator $\rho_\ell$ satisfies the following properties:
  \begin{enumerate}[label=\rm(\textup{\roman*})]
   \item \textbf{Quasi-monotonicity~\cite[Lemma~3.4]{Carstensen2014}:} There exists a constant $\const{C}{mon} > 0$ such that
   \begin{equation} \label{eq:mon}
    \rho_h \leq \const{C}{mon} \, \rho_H.
   \end{equation}
   \item \textbf{Optimality of Dörfler marking~\cite[Proposition~4.12]{Carstensen2014}:} There exist constants $0 < \theta_0, q_0 < 1$ such that there holds the implication
   \begin{equation} \label{eq:modcontr}
    \rho_h \leq q_0 \, \rho_H
    \quad \implies \quad
    \theta_0 \, \rho_H^2 \leq \rho_H(\TT_H^\Gamma[\TT_H^\Gamma \setminus \TT_h^\Gamma])^2.
   \end{equation}
  \end{enumerate}
  The constant $\const{C}{mon} > 0$ depends only on the boundary $\Gamma$, the initial mesh $\TT_0^\Gamma$, the dimension $d$, and the polynomial degree $p$, whereas the constants $\theta_0$ and $q_0$ depend only on the boundary $\Gamma$, the polynomial degree $p$, and the use of NVB.
\end{lemma}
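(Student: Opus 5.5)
The statement is Lemma~\ref{lem:monopt}. Both properties are consequences of the axioms (A1)--(A4) from Lemma~\ref{lem:axioms} alone, in the abstract framework of~\cite{Carstensen2014}. I would reproduce the two standard arguments, using only stability, reduction, discrete reliability and reliability~\eqref{eq:resrel}.

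\textbf{Quasi-monotonicity.} I would split $\rho_h^2 = \rho_h(\TT_h^\Gamma\cap\TT_H^\Gamma)^2 + \rho_h(\TT_h^\Gamma\setminus\TT_H^\Gamma)^2$. On the non-refined part, stability (A1) with $\psi_h=\phi_h^\star$ and $\psi_H=\phi_H^\star$ gives $\rho_h(\TT_h^\Gamma\cap\TT_H^\Gamma)\le \rho_H(\TT_H^\Gamma\cap\TT_h^\Gamma)+\const{C}{stab}\enorm{\phi_h^\star-\phi_H^\star}$. On the refined part, I would first use (A1) in its variant for arbitrary discrete arguments, i.e.\ $\rho_h(\cdot;\phi_h^\star)\le\rho_h(\cdot;\phi_H^\star)+C\enorm{\phi_h^\star-\phi_H^\star}$; this is contained in the inverse estimate of~\cite{Aurada2015a} used in Lemma~\ref{thm:equivresprop}. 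Then reduction (A2) gives $\rho_h(\TT_h^\Gamma\setminus\TT_H^\Gamma;\phi_H^\star)\le \const{q}{red}\rho_H(\TT_H^\Gamma\setminus\TT_h^\Gamma)$. Combining the two parts yields $\rho_h\lesssim\rho_H+\enorm{\phi_h^\star-\phi_H^\star}$. Finally, discrete reliability (A3), or alternatively the Galerkin orthogonality $\enorm{\phi_h^\star-\phi_H^\star}\le\enorm{\phi^\star-\phi_H^\star}$ together with~\eqref{eq:resrel}, bounds the last term by $\rho_H$. This gives~\eqref{eq:mon}.

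\textbf{Optimality of Dörfler marking.} Suppose $\rho_h\le q_0\rho_H$. Let $\mathcal R\coloneqq\TT_H^\Gamma\setminus\TT_h^\Gamma$, so that $\TT_H^\Gamma\setminus\mathcal R$ consists of non-refined elements. I would write $\rho_H^2=\rho_H(\mathcal R)^2+\rho_H(\TT_H^\Gamma\setminus\mathcal R)^2$. Stability (A1) on $\TT_H^\Gamma\setminus\mathcal R\subseteq\TT_h^\Gamma$ gives, via Young's inequality,
\begin{equation*}
\rho_H(\TT_H^\Gamma\setminus\mathcal R)^2\le 2\rho_h^2+2\const{C}{stab}^2\enorm{\phi_h^\star-\phi_H^\star}^2\le 2q_0^2\rho_H^2+2\const{C}{stab}^2\const{C}{dlr}\rho_H(\TT_H^\Gamma[\mathcal R])^2 .
\end{equation*}
Here the second inequality uses the hypothesis and discrete reliability (A3). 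Since $\mathcal R\subseteq\TT_H^\Gamma[\mathcal R]$, I obtain $(1-2q_0^2)\rho_H^2\le(1+2\const{C}{stab}^2\const{C}{dlr})\rho_H(\TT_H^\Gamma[\mathcal R])^2$. Choosing $q_0^2<1/2$ and $\theta_0\coloneqq(1-2q_0^2)/(1+2\const{C}{stab}^2\const{C}{dlr})$ then gives~\eqref{eq:modcontr}.

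\textbf{Main obstacle.} The delicate point is the monotonicity step on refined elements. Axiom (A1) as stated concerns non-refined sets only, so one needs either the general inverse estimate from~\cite{Aurada2015a} on $\TT_h^\Gamma$, or a detour through reliability. The constants must be tracked so that they depend only on $\Gamma$, $\TT_0$, $d$ and $p$, as claimed.
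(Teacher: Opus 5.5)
Your proposal is correct and follows essentially the paper's route. The paper gives no separate proof: after verifying (A1)--(A4) in Lemma~\ref{lem:axioms}, it invokes \cite[Lemma~3.4, Proposition~4.12]{Carstensen2014}, whose proofs are exactly your stability/reduction/discrete-reliability splitting and your Young-inequality argument. Your remark that the paper's (A2) has no perturbation term is on point. On refined elements one must add the inverse estimate $\sum_{F\in\TT_h^\Gamma} h_F\norm{\nabla_\Gamma V(\phi_h^\star-\phi_H^\star)}_F^2\lesssim\enorm{\phi_h^\star-\phi_H^\star}^2$ from \cite{Aurada2015a}, and this is precisely what recovers the reduction axiom of \cite{Carstensen2014} and justifies the citation.
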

We recall the following result concerning optimal complexity of NVB from~\cite{Binev2004,Stevenson2008,Karkulik2012}.
\begin{lemma}[Closure estimate] \label{lem:nvbopt}
  Let $(\TT_\ell)_{\ell \in \N_0}$ be a sequence of successive refinements in $\T$ in the sense that, for all $\ell \in \N_0$, $\TT_{\ell + 1} = \mathtt{refine}(\TT_\ell, \MM_\ell)$ for some set of marked elements $\MM_\ell \subseteq \TT_\ell$.
  Then, there exists a constant $\const{C}{nvb} > 0$ depending only on $\TT_0$ such that
  \begin{equation} \label{eq:nvbopt}
   \# \TT_\ell - \# \TT_0 \leq \const{C}{nvb} \sum_{\ell' = 0}^{\ell - 1} \# \MM_{\ell'}.
   \qquad \qed
  \end{equation}
\end{lemma}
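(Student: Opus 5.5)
The plan is the Binev--Dahmen--DeVore weight-counting argument, in the form given by Stevenson for general $d$. We want to bound the number of elements created by closure by a constant times the total number of marked elements. For every simplex $T$ that can arise from $\TT_0$ by NVB, let its \emph{generation} $\mathrm{gen}(T)\in\N_0$ be the number of bisections needed to produce $T$ from its ancestor in $\TT_0$. Uniform shape regularity~\eqref{eq:shapereg} then gives $h_T\simeq 2^{-\mathrm{gen}(T)/d}\,h_{T_0}$, with constants depending only on $\TT_0$.

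The first key step is the \emph{closure chain} property of NVB. Suppose the refinement of a marked element $M\in\MM_{\ell}$ forces the bisection of some $T'\in\TT_\ell$ in order to restore conformity. Then $\mathrm{gen}(T')\le \mathrm{gen}(M)$. Moreover, any newly created $T\in\TT_{\ell+1}\setminus\TT_\ell$ is linked to some $M\in\MM_\ell$ by a chain $M=T_0,T_1,\dots,T_J$ with $T_J$ the parent of $T$. Along this chain $\mathrm{gen}(T_j)=\mathrm{gen}(T_{j-1})-1$ (or equal, in a bounded number of steps), and $\mathrm{dist}(T_j,T_{j-1})=0$. Summing the geometric series of diameters yields
\begin{equation*}
\mathrm{dist}(T,M)\le C\,2^{-\mathrm{gen}(T)/d}
\quad\text{and}\quad
\mathrm{gen}(T)\le \mathrm{gen}(M)+1.
\end{equation*}
Establishing this property is the main obstacle. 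For admissible (matching) initial meshes it follows from Stevenson's compatibility condition. For arbitrary $\TT_0$, I would rely on \cite{Karkulik2012} ($d=2$) and \cite{Diening2025} (general $d$), which show that NVB on any conforming initial mesh still satisfies such a bounded-chain statement, possibly after a uniform bounded generation offset. I would therefore take the chain property as the imported ingredient rather than reprove it.

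Given the chain property, define for $a>0$ and $b>0$ (to be fixed) the weight
\begin{equation*}
\lambda(T,M)\coloneqq
\begin{cases}
2^{\,(\mathrm{gen}(T)-\mathrm{gen}(M))\,b} & \text{if } \mathrm{dist}(T,M)< a\,2^{-\mathrm{gen}(T)/d} \text{ and } \mathrm{gen}(T)\le \mathrm{gen}(M)+1,\\
0 & \text{otherwise}.
\end{cases}
\end{equation*}
The argument then has two estimates. \emph{(Upper bound)} For every $M$, the sum of $\lambda(T,M)$ over all simplices $T$ ever created is bounded by a constant. To see this, fix a generation $g$. By shape regularity, the number of non-overlapping generation-$g$ simplices within distance $a2^{-g/d}$ of $M$ is bounded uniformly. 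Summing the resulting geometric series in $g\le\mathrm{gen}(M)+1$ gives a bound $C_1$, provided $b>0$. \emph{(Lower bound)} Every $T\in\TT_\ell\setminus\TT_0$ satisfies $\sum_{\ell'<\ell}\sum_{M\in\MM_{\ell'}}\lambda(T,M)\ge 1$. For this, follow the chain backwards through successive refinement steps: from $T$ to the marked element that caused its creation, then, if that element had larger distance or higher generation, onward to earlier marked elements. The chain property ensures that generations decrease while the accumulated distance stays $\lesssim 2^{-\mathrm{gen}(T)/d}$. Choosing $a$ large relative to the chain constant, we either hit an $M$ with $\mathrm{gen}(M)=\mathrm{gen}(T)-1$ and weight at least $1$, or we reach generation $0$, which also gives weight $\ge1$.

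Combining the two bounds, we obtain
\begin{equation*}
\#\TT_\ell-\#\TT_0\le\#(\TT_\ell\setminus\TT_0)\le\sum_{T}\sum_{\ell'<\ell}\sum_{M\in\MM_{\ell'}}\lambda(T,M)\le C_1\sum_{\ell'=0}^{\ell-1}\#\MM_{\ell'},
\end{equation*}
which is~\eqref{eq:nvbopt} with $\const{C}{nvb}=C_1$. This constant depends only on $\TT_0$ through $\kappa$ and the chain constant.
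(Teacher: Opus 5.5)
The paper does not prove this lemma; it quotes it from \cite{Binev2004,Stevenson2008,Karkulik2012}. Your weight-counting strategy, with the single-step chain property imported, is the argument used there, and your upper bound is fine for every $b>0$.

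\textbf{The gap is in the lower bound.} Within one refinement step the chain from $M$ to $T$ does have decreasing generations. Once you continue backwards through earlier steps, however, the chain is $T=M_0$, $M_1\in\MM_{k_1}$, $M_2\in\MM_{k_2}$, \dots\ with $k_1>k_2>\cdots$, and the chain property only gives
\begin{equation*}
\mathrm{gen}(M_{j+1})\ge\mathrm{gen}(M_j)-1
\quad\text{and}\quad
\mathrm{dist}(M_j,M_{j+1})\le C\,2^{-\mathrm{gen}(M_j)/d}.
\end{equation*}
Generations may stay constant or increase: closure around a fine marked element bisects coarser neighbours, so going back in time you can land on finer elements. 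The number of backward steps is bounded only by $\ell$. Hence your claim that ``generations decrease while the accumulated distance stays $\lesssim 2^{-\mathrm{gen}(T)/d}$'' is unjustified, and no choice of $a$ keeps the chain inside the ball. Your dichotomy therefore misses the decisive case: the chain leaves $\{\mathrm{dist}(\cdot,T)<a\,2^{-\mathrm{gen}(T)/d}\}$ before it produces a marked element of generation $\mathrm{gen}(T)-1$. Reaching generation $0$ is not a separate case, because generations drop by at most one per backward step.

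\textbf{The missing case forces a restriction on $b$.} Let $g=\mathrm{gen}(T)$, suppose $M_1,\dots,M_s$ lie in the ball with $\mathrm{gen}(M_j)\ge g$, and suppose $M_{s+1}$ lies outside. Using $\mathrm{dist}(A,C)\le\mathrm{dist}(A,B)+\mathrm{diam}(B)+\mathrm{dist}(B,C)$ along the chain, you get
\begin{equation*}
a\,2^{-g/d}\le\mathrm{dist}(T,M_{s+1})\le C'\sum_{j=0}^{s}2^{-\mathrm{gen}(M_j)/d}
\quad\Longrightarrow\quad
\sum_{j=1}^{s}2^{(g-\mathrm{gen}(M_j))/d}\ge \frac{a}{C'}-1 .
\end{equation*}
This yields $\sum_j\lambda(T,M_j)\ge 1$ for $a\ge 2C'$ only if the weight dominates the size ratio $\mathrm{diam}(M)/\mathrm{diam}(T)\simeq 2^{(\mathrm{gen}(T)-\mathrm{gen}(M))/d}$ whenever $\mathrm{gen}(M)\ge\mathrm{gen}(T)$. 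In your ansatz that means $0<b\le 1/d$. For $b>1/d$, fine elements along the chain cover distance without contributing enough weight, and the argument gives no lower bound at all. So $b$ cannot be left ``to be fixed'' only by the upper bound.

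\textbf{How to repair it.} Take $b=1/d$ and use the following stopping rule. Stop at the first $M_j$ that either has generation $g-1$ inside the ball (weight $2^{1/d}\ge 1$) or lies outside the ball (the exit case above). The chain must end in one of these ways: the $k_j$ strictly decrease, and $M_j\notin\TT_0$ as long as $\mathrm{gen}(M_j)\ge g\ge 1$. With this, the lower bound holds and, combined with your upper bound, gives~\eqref{eq:nvbopt}.
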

Furthermore, we state the following result, which is found in~\cite{Stevenson2007,Cascon2008}.
\begin{lemma}[Overlay estimate] \label{lem:overlay}
 Let $\TT_H,\TT_h \in \T$. Then, the overlay
 \begin{equation} \label{eq:overlay}
  \TT_H \oplus \TT_h \coloneqq
  \set{T_H \cap T_h \given T_H \in \TT_H, T_h \in \TT_h, \abs{T_H \cap T_h} > 0}
 \end{equation}
 satisfies
 \begin{equation} \label{eq:overlayesti}
  \TT_H \oplus \TT_h \in \T
  \qquad \text{and} \qquad  
  \# (\TT_H \oplus \TT_h) \leq \# \TT_H + \# \TT_h - \# \TT_0.
  \qquad \qed
 \end{equation}
\end{lemma}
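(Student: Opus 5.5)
The plan is to encode every NVB mesh by its refinement forest and to read off both claims of~\eqref{eq:overlayesti} from that encoding. Each $\TT \in \T$ arises from $\TT_0$ by finitely many bisections. It therefore corresponds to a finite forest $\mathcal{F}(\TT)$ inside the infinite binary forest generated by recursively bisecting every $T_0 \in \TT_0$. The roots of this forest are the elements of $\TT_0$, every inner node has exactly its two NVB children, and the leaves are the elements of $\TT$.

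The key fact I would establish first is \emph{nestedness}. The bisection of any simplex is uniquely determined by its reference edge, and that edge is inherited deterministically from $\TT_0$. Hence any two elements $T_H \in \TT_H$ and $T_h \in \TT_h$ are nodes of the same master forest, and if $\abs{T_H \cap T_h} > 0$, then one of them is an ancestor of the other. Thus $T_H \cap T_h \in \{T_H, T_h\}$, namely the smaller of the two. Consequently, $\TT_H \oplus \TT_h$ is exactly the set of leaves of the union forest $\mathcal{F}(\TT_H) \cup \mathcal{F}(\TT_h)$. This union is again a forest in which every inner node has both children, since each inner node is inner in one of the two forests and its children appear there.

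The counting then follows from the observation that every bisection increases the number of leaves by exactly one. For any such forest with inner-node set $\mathcal{I}$, this gives $\#\text{leaves} = \#\TT_0 + \#\mathcal{I}$. In particular, $\#\mathcal{I}(\TT_H) = \#\TT_H - \#\TT_0$, and the analogous identity holds for $\TT_h$. Since
\begin{equation*}
 \mathcal{I}\bigl(\mathcal{F}(\TT_H) \cup \mathcal{F}(\TT_h)\bigr) = \mathcal{I}(\TT_H) \cup \mathcal{I}(\TT_h),
\end{equation*}
we obtain
\begin{equation*}
 \#(\TT_H \oplus \TT_h) = \#\TT_0 + \#\bigl(\mathcal{I}(\TT_H) \cup \mathcal{I}(\TT_h)\bigr) \leq \#\TT_0 + (\#\TT_H - \#\TT_0) + (\#\TT_h - \#\TT_0),
\end{equation*}
which is the claimed bound.

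The main obstacle is showing $\TT_H \oplus \TT_h \in \T$, i.e., that the leaves of the union forest form a \emph{conforming} mesh reachable by NVB. I would characterize $\T$ as the set of forests $\mathcal{F}$ that are \emph{admissible}: whenever a node is bisected, every element sharing its refinement edge is also bisected within $\mathcal{F}$. This is exactly the closure property enforced by the NVB completion. It holds under the admissibility assumption on $\TT_0$ from~\cite{Stevenson2008}, or in general by the results of~\cite{Karkulik2012,Diening2025}.

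Given this characterization, admissibility is clearly preserved under unions. If a node of the union forest is inner in, say, $\mathcal{F}(\TT_H)$, then the required neighbor bisections are already present in $\mathcal{F}(\TT_H)$ and hence in the union. So the overlay is admissible and therefore lies in $\T$; it is in fact the coarsest common refinement of $\TT_H$ and $\TT_h$. I expect the careful verification of this characterization of conforming NVB meshes, particularly for arbitrary initial meshes, to be the technical core, and I would rely on the cited literature for it.
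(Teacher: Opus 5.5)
Your proposal is correct and is essentially the standard argument from the references the paper cites for this lemma (\cite{Stevenson2007,Cascon2008}); the paper gives no proof of its own and simply quotes the result. As in those references, you obtain the cardinality bound by identifying the overlay with the leaves of the union of the two refinement forests and counting inner nodes, and you rest membership of the overlay in the class of NVB meshes on the NVB closure property (the bisections forced by conformity depend only on the bisected node), which you, like the paper, take from the literature.
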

\subsection{Proof of Theorem~\ref{thm:opt}}

 The proof is split into four steps.

 \textbf{Step~1 (\boldsymbol{$\lVert \phi^\star \rVert_{\A_s^\rho} \lesssim \lVert u^\star \rVert_{\A_s^\mu}$}).}
 Recall the notation from Section~\ref{sec:mainresults} and note that $\T_N \subseteq \T_N^\Gamma$. Therefore, Theorem~\ref{thm:equiv} shows that
 \begin{equation} \label{eq:aceq1}
  \norm{\phi^\star}_{\A_s^\rho}
  = \sup_{N \in \N_0} \bigl( (N + 1)^s \min_{\TT_H \in \T_N^\Gamma} \rho_H \bigr)
  \eqreff*{eq:R2F}{\leq} \const{C}{R2F} \sup_{N \in \N_0} \bigl( (N + 1)^s \min_{\TT_H \in \T_N} \mu_H \bigr)
  = \const{C}{R2F} \norm{u^\star}_{\A_s^\mu}.
 \end{equation}

 \textbf{Step~2.} We show that, for all $N \in \N_0$ and all $\TT_h \in \T_N^\Gamma$, there exists a mesh $\TT_H \in \T_{\lceil 2\const{C}{nvb} N \rceil}$ such that $\TT_H^\Gamma = \TT_h^\Gamma$, where $\const{C}{nvb} > 0$ is the constant from the closure estimate of Lemma~\ref{lem:nvbopt}.
 Let $N \in \N_0$ and $\TT_h \in \T_N^\Gamma$.
 We construct a sequence of successive refinements starting with $\TT_0$ and setting $\TT_{j + 1} \coloneqq \mathtt{refine}(\TT_j,\MM_j)$, where
 \begin{equation*}
  \MM_j \coloneqq \set{T_F \in \TT_j \given F \in \TT_j^\Gamma \setminus \TT_h^\Gamma \text{ such that there exists } F' \in \TT_h^\Gamma \text{ with } F' \subsetneqq F}.
 \end{equation*}
 Since $\TT_h$ is obtained by finitely many steps of NVB, there exists $j_0 \in \N_0$ such that $\MM_j = \emptyset$ and $\TT_j = \TT_{j + 1}$ for all $j \geq j_0$.
 We set $\TT_H \coloneqq \TT_{j_0}$ and note that, due to $\TT_{j + 1}$ being the coarsest conforming refinement of $\TT_j$ such that all elements in $\MM_j$ have been refined (see~\cite[Theorem~5.1]{Stevenson2008}), and since $F' \subsetneqq F$ implies $T_{F'} \subsetneqq T_F$ for all $F \in \TT_j$ and all $F' \in \TT_h^\Gamma$, $\TT_H$ is not finer than $\TT_h$, which also implies that $\TT_H^\Gamma$ is not finer than $\TT_h^\Gamma$.
 On the other hand, since $\MM_{j_0} = \emptyset$, it holds that either $\TT_{j_0}^\Gamma \setminus \TT_h^\Gamma = \emptyset$, which immediately yields $\TT_h^\Gamma = \TT_H^\Gamma$, or for all $F \in \TT_{j_0}^\Gamma \setminus \TT_h^\Gamma$ and all $F' \in \TT_H^\Gamma$ with $\abs{F \cap F'} > 0$, there holds $F \subseteq F'$. The latter implies that $\TT_H^\Gamma$ is not coarser than $\TT_h^\Gamma$.
 Hence, $\TT_H^\Gamma$ is neither coarser nor finer than $\TT_h^\Gamma$, leading to $\TT_H^\Gamma = \TT_h^\Gamma$.

 It remains to show that $\TT_H \in \T_{\lceil 2\const{C}{nvb} N \rceil}$.
 Due to the definition of $\MM_j$ and due to the fact that $T_F$ has to be bisected at most two times for $F$ to be bisected, there holds
 \begin{equation} \label{eq:markest}
  \# \MM_j + \# \MM_{j + 1} \leq 2 (\# \TT_{j + 2}^\Gamma - \# \TT_j^\Gamma)
  \quad \text{for all } j \in \N_0.
 \end{equation}
 Note that $\MM_j = \emptyset$ and $\TT_{j_0} = \TT_j$ for all $j \geq j_0$.
 Hence, with $\TT_h \in \T_N^\Gamma$, Lemma~\ref{lem:nvbopt} establishes that
 \begin{equation*}
  \begin{split}
  &\# \TT_H - \# \TT_0
  \eqreff*{eq:nvbopt}{\leq} \const{C}{nvb} \sum_{j = 0}^{j_0-1} \# \MM_j
  \leq \const{C}{nvb} \sum_{j = 0}^{\lceil (j_0-2)/2 \rceil} (\# \MM_{2j} + \# \MM_{2j + 1}) \\
  &\quad \eqreff*{eq:markest}{\leq} 2 \const{C}{nvb} \sum_{j = 0}^{\lceil (j_0 - 2)/2 \rceil} (\# \TT_{2(j + 1)}^\Gamma - \# \TT_{2j}^\Gamma)
  = 2 \const{C}{nvb} (\# \TT_{j_0}^\Gamma - \# \TT_0^\Gamma) \\
  &\quad = 2\const{C}{nvb} (\# \TT_h^\Gamma - \# \TT_0^\Gamma)
  \leq 2 \const{C}{nvb} N
  \leq \lceil 2 \const{C}{nvb} N \rceil.
  \end{split}
 \end{equation*}
 This shows that $\TT_H \in \T_{\lceil 2 \const{C}{nvb} N \rceil}$.

 \textbf{Step~3 (\boldsymbol{$\lVert u^\star \rVert_{\A_s^\mu} \lesssim \lVert u^\star \rVert_{\A_s^\rho}$}).}
 Let $M \in \N_0$ and define $N_M \coloneqq \max \set{N' \in \N_0 \given \lceil 2 \const{C}{nvb} N' \rceil \leq M}$. This leads to $\lceil 2 \const{C}{nvb} N_M \rceil \leq M < \lceil 2 \const{C}{nvb} (N_M + 1) \rceil$.
 For every $\TT_h \in \T_{N_M}^\Gamma$, Step~2 provides $\TT_H \in \T_{\lceil 2 \const{C}{nvb} N_M \rceil}$ with $\TT_H^\Gamma = \TT_h^\Gamma$.
 Since the residual error estimator depends only on the boundary mesh, there holds $\rho_H = \rho_h$. 
 Together with $\T_{\lceil 2 \const{C}{nvb} N_M \rceil} \subseteq \T_M$ and $N + 1 \leq 2N$ for all $N \in \N$, Theorem~\ref{thm:equiv} yields that
 \begin{equation*}
  \begin{split}
   &(M + 1)^s \min_{\TT_H \in \T_M} \mu_H
   \eqreff{eq:F2R}{\leq} \const{C}{F2R} (M + 1)^s  \min_{\TT_H \in \T_M} \rho_H
   \leq \const{C}{F2R} \bigl(M + 1\bigr)^s \min_{\TT_H \in \T_{\lceil 2 \const{C}{nvb} N_M \rceil}} \rho_H \\
   &\qquad \leq \const{C}{F2R}\lceil 2 \const{C}{nvb} (N_M + 1) \rceil^s \min_{\TT_h \in \T_{N_M}^\Gamma} \rho_h
   \leq  \const{C}{F2R}(2\const{C}{nvb} + 1)^s (N_M + 1)^s \min_{\TT_h \in \T_{N_M}^\Gamma} \rho_h.
  \end{split}
 \end{equation*}
 Taking the supremum over $M \in \N_0$, we establish that
 \begin{equation} \label{eq:aceqfinal}
  \begin{split}
  \norm{u^\star}_{\A_s^\mu}
  &\leq (2\const{C}{nvb} + 1)^s \const{C}{F2R} \sup_{M \in \N_0} (N_M + 1)^s \min_{\TT_h \in \T_{N_M}^\Gamma} \rho_h \leq (2\const{C}{nvb} + 1)^s \const{C}{F2R} \norm{\phi^\star}_{\A_s^\rho}.
  \end{split}
 \end{equation}
 The estimates~\eqref{eq:aceq1} and~\eqref{eq:aceqfinal} conclude the proof of~\eqref{eq:approxclasseq} with $\const{c}{rate} = \const{C}{R2F}^{-1}$ and $\const{C}{rate} \coloneqq (2\const{C}{nvb} + 1)^s \const{C}{F2R}$.

 \textbf{Step~4 (Proof of (\ref{eq:opt})).}
 First, suppose that $\ell = 0$. Since $N_0 = 1$, it holds that
 \begin{equation*}
  N_0^s \rho_0 = \rho_0 = \min_{\TT_H \in \T_0^\Gamma} \rho_H \leq \sup_{N \in \N_0} \bigl( (N + 1)^s \min_{\TT_H \in \T_N} \rho_H \bigr) = \norm{\phi^\star}_{\A_s^\rho} < +\infty.
 \end{equation*}
 Hence, Theorem~\ref{thm:equiv} yields that
 \begin{equation*}
  \mu_0
  \eqreff{eq:F2R}{\lesssim} \rho_0
  \lesssim N_0^{-s}
  = (N_0^\Gamma)^{-s},
 \end{equation*}
 which, together with reliability~\eqref{eq:funcrelpot}, concludes the proof for $\ell = 0$. 
 
 Let $\ell \geq 1$ for the remainder of the proof.
 Recall $0 < q_0 < 1$ and $\const{C}{mon} > 0$ from Lemma~\ref{lem:monopt}. Without loss of generality, we assume that $\rho_\ell > 0$ and $\norm{\phi^\star}_{\A_s^\rho} > 0$. Let $0 < \delta < q_0\const{C}{mon}^{-1}$ and set $\eps \coloneqq \delta \rho_\ell  > 0$ as well as $N \coloneqq \lceil \eps^{-1/s} \, \norm{\phi^\star}_{\A_s^\rho}^{1/s} \rceil \in \N$. Since $\norm{\phi^\star}_{\A_s^\rho} < \infty$, there exists a mesh $\TT_\eps \in \T$ such that 
 \begin{equation} \label{eq:mesheps}
  \# \TT_\eps^\Gamma - \# \TT_0^\Gamma 
  \leq N = \lceil \eps^{-1/s} \, \norm{\phi^\star}_{\A_s^\rho}^{1/s} \rceil 
  \lesssim \eps^{-1/s} 
 \end{equation} 
 and
 \begin{equation} \label{eq:reseps}
  \rho_\eps 
  \leq (N + 1)^{-s} \norm{\phi^\star}_{\A_s^\rho}
  \leq N^{-s} \norm{\phi^\star}_{\A_s^\rho}
  \leq (\eps^{-1/s} \, \norm{\phi^\star}_{\A_s^\rho}^{1/s})^{-s} \norm{\phi^\star}_{\A_s^\rho}
  = \eps.
 \end{equation}
 We define $\TT_h \coloneqq \TT_\eps \oplus \TT_\ell$ as the overlay of $\TT_\eps$ and $\TT_\ell$; see~\eqref{eq:overlay}.
 Due to Lemma~\ref{lem:monopt}(i) and the choice of $\eps$, we obtain that
 \begin{equation}
  \begin{split}
  \rho_h
  &\eqreff*{eq:mon}{\leq} \const{C}{mon} \rho_\eps
  \eqreff*{eq:reseps}{\leq} \const{C}{mon} \eps
  = \const{C}{mon} \delta \rho_\ell
  < q_0 \rho_\ell.
  \end{split}
 \end{equation}
 Hence, Lemma~\ref{lem:monopt}(ii) yields that
 \begin{equation} \label{eq:dorflerappl}
  \theta_0 \rho_\ell^2 \leq \rho_\ell(\TT_\ell^\Gamma[\TT_\ell^\Gamma \setminus \TT_h^\Gamma])^2.
 \end{equation}
Given $0 < \theta \leq \const{\theta}{opt} \coloneqq \const{C}{F2R}^{-2} \const{C}{R2F}^{-2} \theta_0$, Theorem~\ref{thm:equiv} establishes that
 \begin{equation*}
  \begin{split}
  \theta \mu_\ell^2
  &\eqreff{eq:F2R}{\leq} \const{C}{F2R}^2 \theta \rho_\ell^2 
  \leq \const{C}{F2R}^2 \const{\theta}{opt} \rho_\ell^2
  = \const{C}{R2F}^{-2} \theta_0 \rho_\ell^2
  \eqreff{eq:dorflerappl}{\leq} \const{C}{R2F}^{-2} \rho_\ell(\TT^\Gamma_\ell[\TT^\Gamma_\ell \setminus \TT^\Gamma_h])^2.
  \end{split}
 \end{equation*}
 For $F \in \TT_\ell^\Gamma[\TT_\ell^\Gamma \setminus \TT_h^\Gamma]$, there exists $F' \in \TT_\ell^\Gamma \setminus \TT_h^\Gamma$ such that $F \cap F' \neq \emptyset$.
 Hence, $T_F \cap T_{F'} \neq \emptyset$ and $T_{F'} \in \TT_\ell \setminus \TT_h$ for some $T_F, T_{F'} \in \TT_\ell$ with $F \subseteq \partial T_F$ and $F' \subseteq \partial T_{F'}$.
 Hence, the inequality above together with~\eqref{eq:R2F} yields that
 \begin{equation*}
  \theta \mu_\ell^2
  \leq \const{C}{R2F}^{-2} \rho_\ell(\TT_\ell^\Gamma[\TT_\ell^\Gamma \setminus \TT_h^\Gamma])^2
  \eqreff{eq:R2F}{\leq} \mu_\ell(\TT_\ell[\TT_\ell^\Gamma \setminus \TT_h^\Gamma])^2.
 \end{equation*}
 Since $\MM_\ell$ has quasi-minimal cardinality, the overlay estimate~\eqref{eq:overlayesti} of Lemma~\ref{lem:overlay} and Theorem~\ref{thm:equiv} prove that
 \begin{equation*}
  \begin{split}
  \#\MM_\ell
  &\lesssim \# \TT_\ell[\TT^\Gamma_\ell \setminus \TT^\Gamma_h]
  \lesssim \# (\TT^\Gamma_\ell \setminus \TT^\Gamma_h)
  \lesssim \# \TT^\Gamma_h - \# \TT^\Gamma_\ell \\
  &\eqreff*{eq:overlayesti}{\leq} \# \TT^\Gamma_\eps - \# \TT^\Gamma_0
  \eqreff{eq:mesheps}{\lesssim} \eps^{-1/s}
  \eqreff{eq:reseps}{\lesssim} \rho_\ell^{-1/s}
  \eqreff{eq:F2R}{\lesssim} \mu_\ell^{-1/s}.
  \end{split}
 \end{equation*}
 The closure estimate~\eqref{eq:nvbopt} of Lemma~\ref{lem:nvbopt} and Theorem~\ref{thm:linconv} thus yield that
 \begin{equation*}
  \begin{split}
   N_\ell
   \lesssim \# \TT_\ell - \# \TT_0
   \lesssim \sum_{\ell' = 0}^{\ell - 1} \# \MM_{\ell'} 
   \lesssim \sum_{\ell' = 0}^{\ell - 1} \mu_{\ell'}^{-1/s}
   \eqreff{eq:linconv}{\lesssim} \mu_\ell^{-1/s} \sum_{\ell' = 0}^{\ell - 1} \const{q}{lin}^{(\ell - \ell')/s}
  \lesssim \mu_\ell^{-1/s}.
  \end{split}
 \end{equation*}
 This, together with Theorem~\ref{thm:equiv} and reliability~\eqref{eq:funcrelpot}, establishes
 \begin{equation*}
  \norm{\nabla(u^\star - u_\ell^\star)}_\Omega 
  \lesssim \mu_\ell
  \lesssim \rho_\ell
  \lesssim N_\ell^{-s} 
  \leq (N_\ell^\Gamma)^{-s},
 \end{equation*}
 which concludes the proof.
\qed

\section{Numerical experiments} \label{sec:numerics}

This section presents some numerical experiments in $2$D that illustrate the performance and accuracy of the proposed functional error estimator and the corresponding adaptive algorithm.
All computations are carried out using the MATLAB toolbox HILBERT~\cite{Aurada2014} for BEM with $p \in \set{0,1}$.
Throughout, we consider Algorithm~\ref{algo:adap} for uniform ($\theta = 1$) and adaptive ($0 < \theta < 1$) mesh refinement.

\subsection{Example 1 (Square domain with hole, smooth solution)} \label{subsec:squareex}

We consider the Laplace--Dirichlet equation~\eqref{eq:Laplace} on the domain $\Omega \coloneqq (-1/4,1/4)^2 \setminus [-1/52,1/52]^2$ with prescribed solution 
\begin{equation*}
 u^\star(x,y) \coloneqq (x + y)/(x^2 + y^2),
\end{equation*}
which is smooth in $\Omega$ but exhibits a (non-integrable) singularity at the origin.
We note that $\Gamma$ fulfills the scaling condition $\diam(\Gamma) = 1/\sqrt{2} < 1$.
We start Algorithm~\ref{algo:adap} with an initial volume mesh consisting of $672$ triangles and $56$ boundary edges.
We note that due to the hole in $\Omega$ the initial mesh is chosen rather fine to resolve the geometry.
The initial mesh and some adaptively generated meshes are shown in Figure~\ref{fig:smeshes}.
\begin{figure}[!ht]
 \resizebox{\textwidth}{!}{
   \subfloat{
    \stackunder[10pt]{\includegraphics[scale = 0.5]{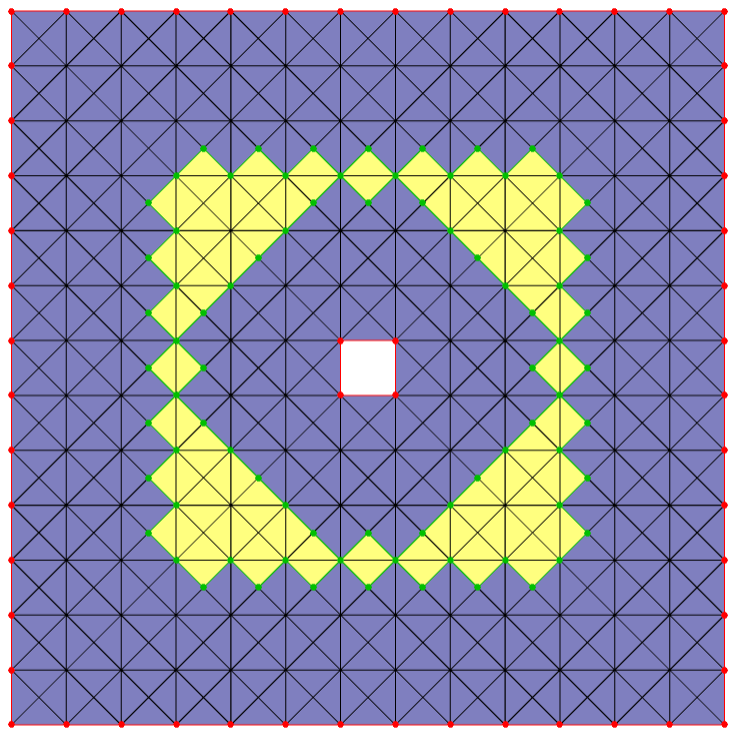}}{{\large $\# \TT_\ell^\Gamma = 56$, $\ell = 0$}}
   }
   \subfloat{
    \stackunder[10pt]{\includegraphics[scale = 0.5]{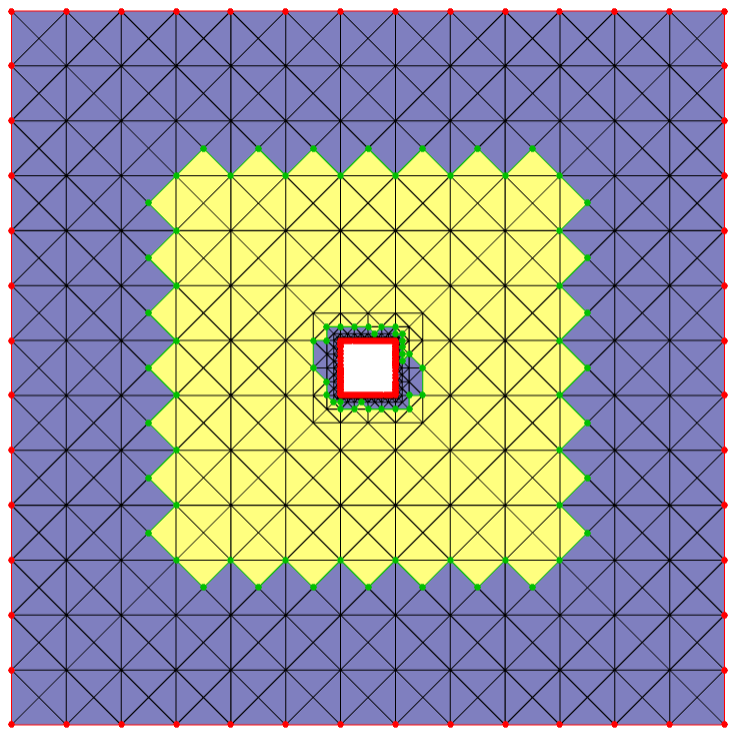}}{{\large $\# \TT_\ell^\Gamma = 186$, $\ell = 30$}}
   }
   
   \subfloat{
   \stackunder[10pt]{\includegraphics[scale = 0.5]{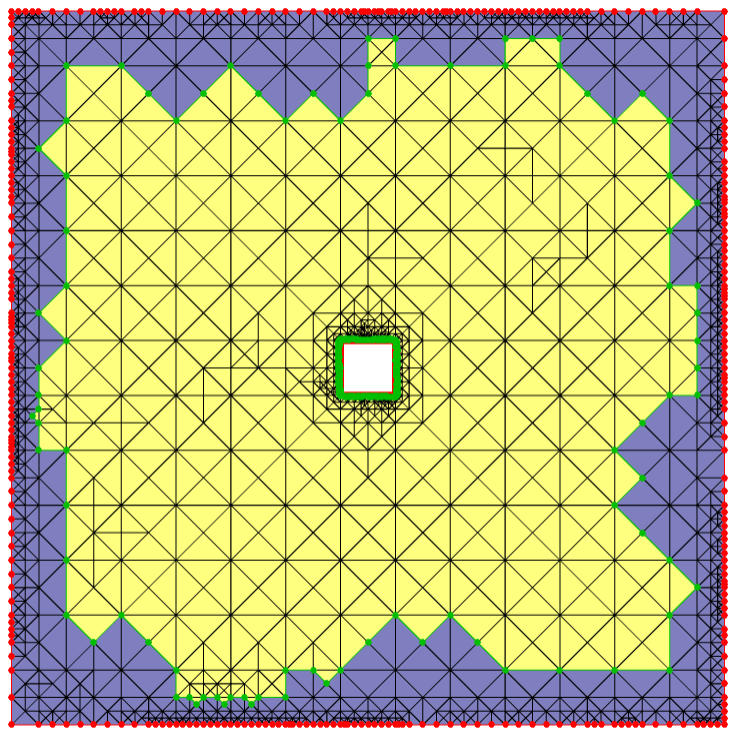}}{{\large $\# \TT_\ell^\Gamma = 1898$, $\ell = 100$}}
   }
  }
   \caption{Meshes generated by Algorithm~\ref{algo:adap} in Example~\ref{subsec:squareex} for $\theta = 0.4$ and $k = 3$. The triangles of the patches $\omega_{\ell,z}$ are depicted in \ul{blue}\setulcolor{yellow}, while the remaining triangles are indicated in \ul{yellow}\setulcolor{red}. The outer boundary $\Gamma$ is shown in \ul{red}\setulcolor{green} and the inner boundary of the union of the patches in \ul{green}\setulcolor{blue}.}
    \label{fig:smeshes}
\end{figure}
Figure~\ref{fig:srates} shows the total upper bound $\mu_\ell$ from~\eqref{eq:funcest} for $p = 0$~(left) and $p = 1$~(right) for patch size $k = 3$ and different adaptivity parameters $\theta \in \set{0.2,0.4,0.6,0.8,1}$. 
\begin{figure}[!ht]
  \resizebox{\textwidth}{!}{
   \subfloat{
    \includegraphics[scale = 0.5]{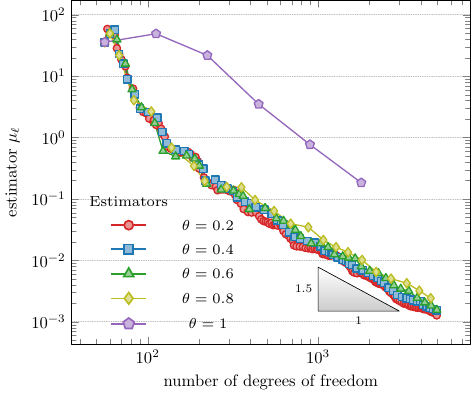}
   }
   \subfloat{
    \includegraphics[scale = 0.5]{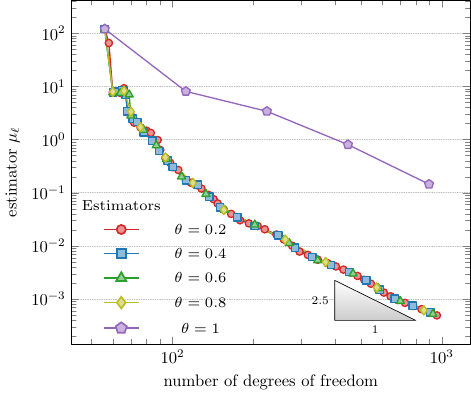}
   }
  }
   \caption{Convergence rates of the full error estimator $\mu_\ell$ from~\eqref{eq:funcest} generated by Algorithm~\ref{algo:adap} in Example~\ref{subsec:squareex} for $k = 3$, different marking parameters $0 < \theta \leq 1$, $p = 0$~(left) and $p = 1$~(right).}
   \label{fig:srates}
 \end{figure}
We observe that for all $\theta$, the error estimator $\mu_\ell$ decays with the optimal convergence rate $\OO((\# \TT_\ell^\Gamma)^{-(p + 3/2)})$ for $p \in \set{0,1}$, although uniform refinement ($\theta = 1$) leads to much larger estimator values than adaptive refinement ($0 < \theta < 1$).
A comparison between the error estimator $\eta_\ell$ from Section~\ref{sec:funcest}, the functional error estimator obtained by globally solving the auxiliary problem~\cite{Kurz2021,Freiszlinger2025}, the residual error estimator $\rho_\ell$ from~\eqref{eq:resest}, the Faermann error estimator~\cite{Faermann2000,Faermann2002}, the $h-h/2$ error estimator~\cite{Mund1998,Ferraz-Leite2008}, and the exact error $\norm{\nabla(u^\star - u_\ell^\star)}_\Omega$ for $p = 0$, $k = 3$, and $\theta = 0.4$ is shown in Figure~\ref{fig:scomp}~(left).
The corresponding experimental reliability constants $\mathtt{estimator} / \norm{\nabla(u^\star - u_\ell^\star)}_\Omega$ are shown in Figure~\ref{fig:scomp}~(right).
\begin{figure}[!ht]
  \resizebox{\textwidth}{!}{
   \subfloat{
    \includegraphics{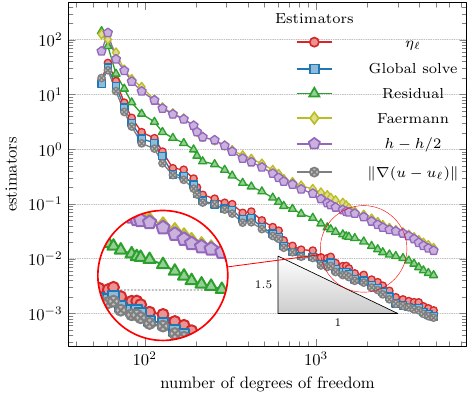}
   }
   \subfloat{
    \includegraphics{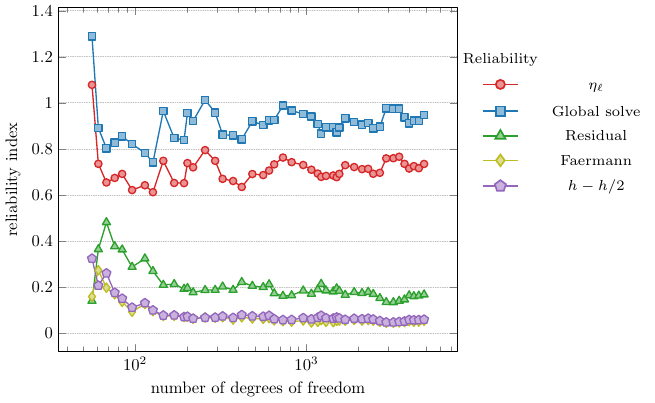}
   }
  }
   \caption{Comparison between $\eta_\ell$, the functional estimator obtained by a global solve, the residual error estimator $\rho_\ell$, the Faermann error estimator, the $h-h/2$ error estimator, and the exact error in terms of convergence rates~(left) and experimental reliability constants~(right) for $p = 0$, $k = 3$, and $\theta = 0.4$ in Example~\ref{subsec:squareex}.}
   \label{fig:scomp}
 \end{figure}
Here, the exact error is computed by $\norm{\nabla(u^\star - u_\ell^\star)}_\Omega \approx \norm{\nabla I_\ell(u^\star - u_\ell^\star)}_\Omega$, where $I_\ell$ denotes the $\SS^2(\TT_\ell)$-nodal interpolation operator.
We observe that the functional error estimators, in contrast to the remaining estimators, exhibit excellent quality in terms of approximating the exact error $\norm{\nabla(u^\star - u_\ell^\star)}_\Omega$ with only minimal differences between the local and global functional error estimators.
Figure~\ref{fig:sk} shows the exact error as well as the error estimator $\eta_\ell$ for $\theta = 0.4$, different patch sizes $k \in \set{1,2,3,4,5}$, $p = 0$~(left) and $p = 1$~(right).
\begin{figure}[!ht]
  \resizebox{\textwidth}{!}{
   \subfloat{
    \includegraphics[scale = 0.5]{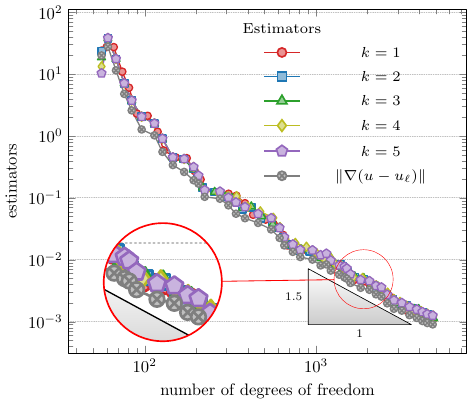}
   }
   \subfloat{
    \includegraphics[scale = 0.5]{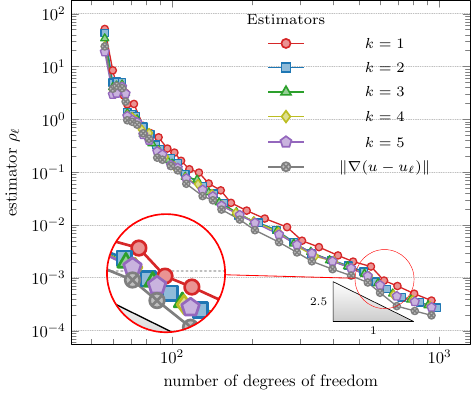}
   }
  }
   \caption{Comparison between $\eta_\ell$ computed with different patch sizes $k \in \set{1,2,3,4,5}$ and the exact error for $\theta = 0.4$, $p = 0$~(left) and $p = 1$~(right) in Example~\ref{subsec:squareex}.}
   \label{fig:sk}
 \end{figure}
In contrast to the global functional error estimator (see~\cite{Kurz2021,Freiszlinger2025}), the local functional error estimator $\eta_\ell$ does not exhibit a strong dependence on the patch size $k$ and already for $k = 1$ it provides an accurate approximation, which is desirable as the computational cost grows with $k$. 

\subsection{Example 2 (L-shaped domain, singular solution)} \label{subsec:lshapex}

We consider the Laplace--Dirichlet equation~\eqref{eq:Laplace} on a rotated L-shaped domain $\Omega$ (see Figure~\ref{fig:lmeshes}) with prescribed exact solution
\begin{equation*}
 u^\star(r,\varphi) \coloneqq r^{2/3} \sin(2\varphi/3)
\end{equation*}
given in polar coordinates $(r,\varphi)$.
Unlike Example~\ref{subsec:squareex}, the solution $u^\star$ exhibits a singularity at the reentrant corner and hence is not smooth in $\Omega$.
We start Algorithm~\ref{algo:adap} with an initial volume mesh $\TT_0$ consisting of $12$ triangles and $8$ boundary edges.
Figure~\ref{fig:lmeshes} shows the initial mesh and some adaptively generated meshes for $\theta = 0.4$ and $k = 3$.
\begin{figure}[!ht]
 \resizebox{\textwidth}{!}{
   \subfloat{
    \stackunder[10pt]{\includegraphics[scale = 0.5]{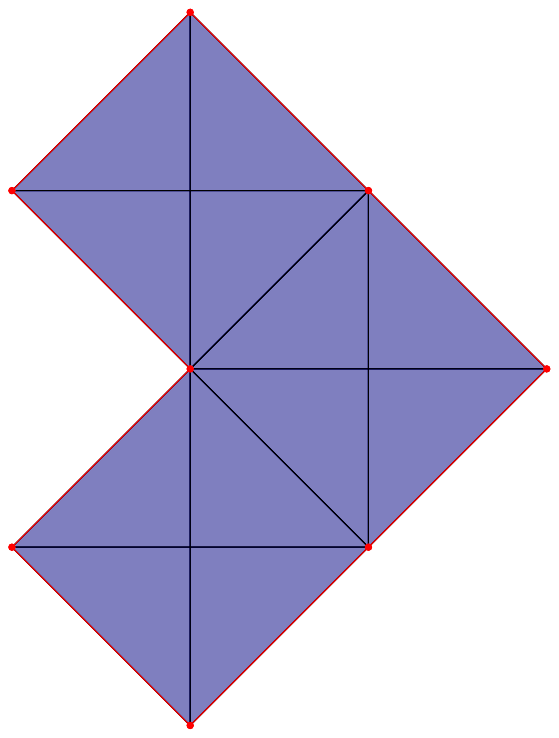}}{{$\# \TT_\ell^\Gamma = 8$, $\ell = 0$}}
   }
   \subfloat{
    \stackunder[10pt]{\includegraphics[scale = 0.5]{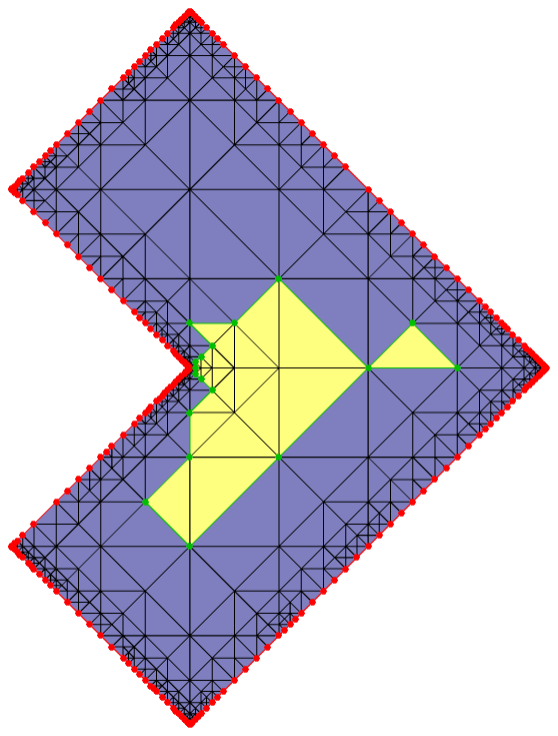}}{{ $\# \TT_\ell^\Gamma = 351$, $\ell = 50$}}
   }
   
   \subfloat{
   \stackunder[10pt]{\includegraphics[scale = 0.5]{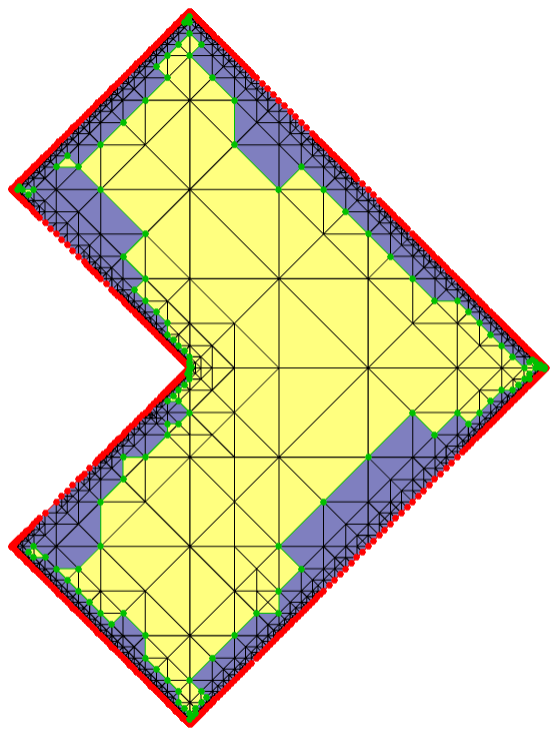}}{{ $\# \TT_\ell^\Gamma = 1133$, $\ell = 70$}}
   }
  }
   \caption{Meshes generated by Algorithm~\ref{algo:adap} in Example~\ref{subsec:lshapex} for $\theta = 0.4$ and $k = 3$. The triangles of the patches $\omega_{\ell,z}$ are depicted in \ul{blue}\setulcolor{yellow}, while the remaining triangles are indicated in \ul{yellow}\setulcolor{red}. The outer boundary $\Gamma$ is shown in \ul{red}\setulcolor{green} and the inner boundary of the union of the patches in \ul{green}\setulcolor{blue}.}
    \label{fig:lmeshes}
\end{figure}
We observe that the adaptive algorithm is able to resolve the singularity at the reentrant corner adequately. 
The total upper bound $\mu_\ell$ from~\eqref{eq:funcest} for $p = 0$~(left) and $p = 1$~(right) for patch size $k = 3$ and different adaptivity parameters $\theta \in \set{0.2,0.4,0.6,0.8,1}$ is shown in Figure~\ref{fig:lrates}.
\begin{figure}[!ht]
  \resizebox{\textwidth}{!}{
   \subfloat{
    \includegraphics[scale = 0.5]{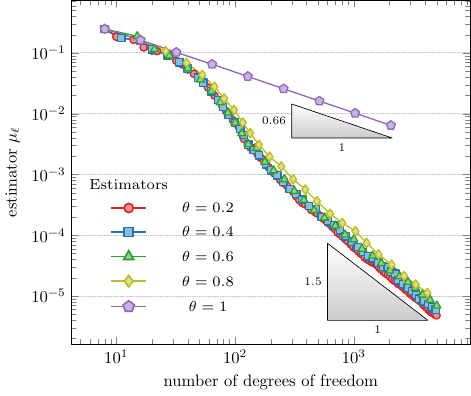}
   }
   \subfloat{
    \includegraphics[scale = 0.5]{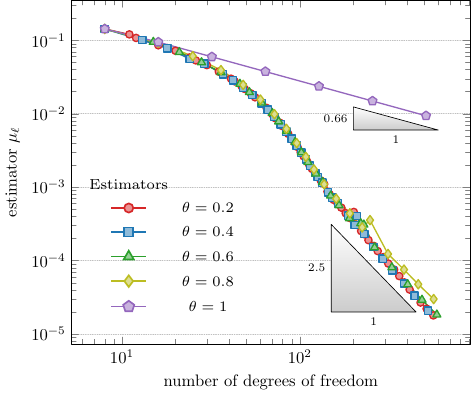}
   }
  }
   \caption{Convergence rates of the full error estimator $\mu_\ell$ from~\eqref{eq:funcest} generated by Algorithm~\ref{algo:adap} in Example~\ref{subsec:lshapex} for $k = 3$, different marking parameters $0 < \theta \leq 1$, $p = 0$~(left) and $p = 1$~(right).}
   \label{fig:lrates}
 \end{figure}
We observe that, for $p \in \set{0,1}$, the error estimator $\mu_\ell$ decays with the optimal convergence rate for $0 < \theta < 1$, while uniform refinement ($\theta = 1$) leads to suboptimal convergence rates.
A comparison between the error estimator $\eta_\ell$ from Section~\ref{sec:funcest}, the functional error estimator obtained by globally solving the auxiliary problem, the residual error estimator $\rho_\ell$ from~\eqref{eq:resest}, the Faermann error estimator, the $h-h/2$ error estimator, and the exact error $\norm{\nabla(u^\star - u_\ell^\star)}_\Omega$ for $p = 0$, $k = 3$, and $\theta = 0.4$ is shown in Figure~\ref{fig:lcomp}~(left).
The corresponding experimental reliability constants $\mathtt{estimator} / \norm{\nabla(u^\star - u_\ell^\star)}_\Omega$ are shown in Figure~\ref{fig:lcomp}~(right).
\begin{figure}[!ht]
  \resizebox{\textwidth}{!}{
   \subfloat{
    \includegraphics[scale = 0.5]{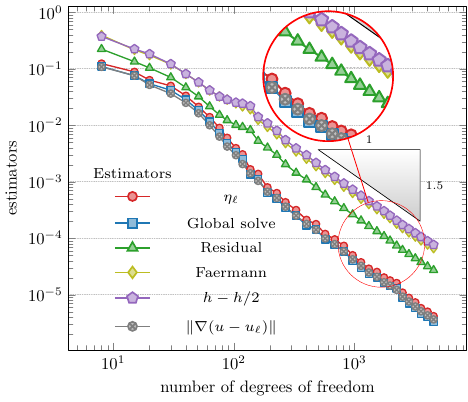}
   }
   \subfloat{
    \includegraphics[scale = 0.5]{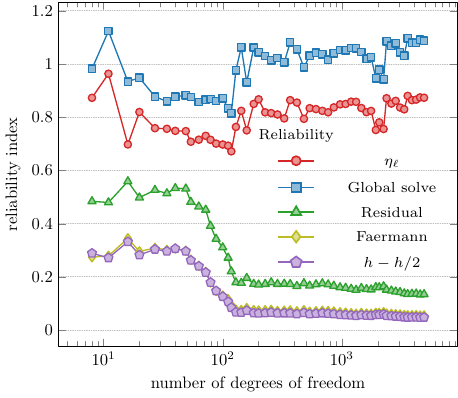}
   }
  }
   \caption{Comparison between $\eta_\ell$, the functional estimator obtained by a global solve, the residual error estimator $\rho_\ell$, the Faermann error estimator, the $h-h/2$ error estimator, and the exact error in terms of convergence rates~(left) and experimental reliability constants~(right) for $p = 0$, $k = 3$, and $\theta = 0.4$ in Example~\ref{subsec:lshapex}.}
   \label{fig:lcomp}
 \end{figure}
As in Example~\ref{subsec:squareex}, the functional error estimators exhibit excellent quality while the remaining estimators show a significant overestimation of the exact error $\norm{\nabla(u^\star - u_\ell^\star)}_\Omega$.
Lastly, Figure~\ref{fig:lk} shows the exact error as well as the error estimator $\eta_\ell$ for $\theta = 0.4$, different patch sizes $k \in \set{1,2,3,4,5}$, $p = 0$~(left) and $p = 1$~(right).
\begin{figure}[!ht]
  \resizebox{\textwidth}{!}{
   \subfloat{
    \includegraphics[scale = 0.5]{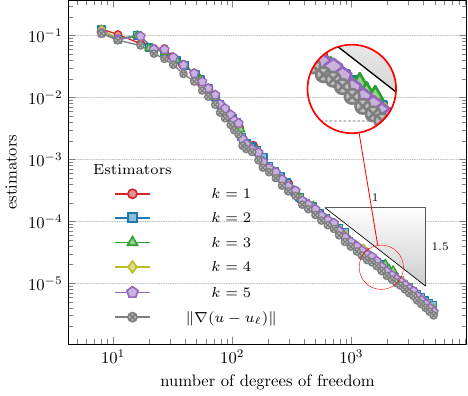}
   }
   \subfloat{
    \includegraphics[scale = 0.5]{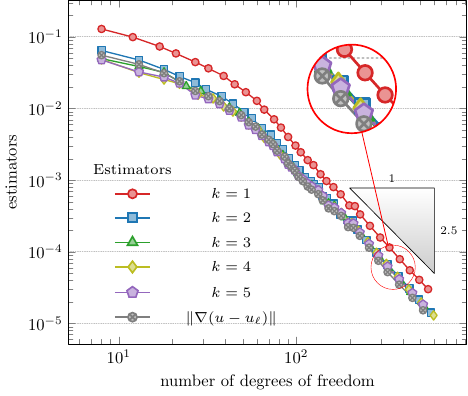}
   }
  }
   \caption{Comparison between $\eta_\ell$ computed with different patch sizes $k \in \set{1,2,3,4,5}$ and the exact error for $\theta = 0.4$, $p = 0$~(left) and $p = 1$~(right) in Example~\ref{subsec:lshapex}.}
   \label{fig:lk}
 \end{figure}
As in Example~\ref{subsec:squareex}, the local functional error estimator $\eta_\ell$ does not depend strongly on the patch size $k$.
However, in contrast to Example~\ref{subsec:squareex}, the error estimator $\eta_\ell$ for $p = 1$ and $k = 1$ is not as accurate as for larger patch sizes.
Still, already for $k = 2$ the error estimator $\eta_\ell$ provides an accurate approximation of the exact error, while the computational cost is still relatively low.

\printbibliography

\end{document}